\def\pmod #1{\ ({\rm{mod}}\ #1)}
\def\Z{\Bbb Z}
\def\N{\Bbb N}
\def\Q{\Bbb Q}
\def\R{\Bbb R}
\def\bg{\bigg}
\def\({\bg(}
\def\){\bg)}
\def\ord{{\rm ord}}
\def\ve{\varepsilon}
\theoremstyle{plain}
\newtheorem{theorem}{Theorem}
\newtheorem{lemma}{Lemma}
\newtheorem{corollary}{Corollary}
\newtheorem{conjecture}{Conjecture}
\newtheorem{proposition}{Proposition}
\theoremstyle{definition}
\theoremstyle{remark}
\newtheorem{remark}{Remark}
\begin{document}
 \baselineskip=17pt
\hbox{}
\medskip
\title[On the sums of three generalized polygonal numbers]
{On the sums of three generalized polygonal numbers}
\date{}
\author[Hai-Liang Wu and Hao Pan] {Hai-Liang Wu and Hao Pan}

\thanks{2010 {\it Mathematics Subject Classification}.
Primary 11E25; Secondary 11D85, 11E20, 11F27, 11F37.
\newline\indent {\it Keywords}. Sums of polygonal numbers, ternary quadratic forms, half-integral weight modular forms.
\newline \indent Supported by the National Natural Science
Foundation of China (Grant No. 11571162).}

\address {(Hai-Liang Wu)  Department of Mathematics, Nanjing
University, Nanjing 210093, People's Republic of China}
\email{\tt whl.math@smail.nju.edu.cn}

\address {(Hao Pan) Department of Mathematics, Nanjing
University of Finance $\&$ Economics, Nanjing 210023, People's Republic of China}
\email{{\tt haopan79@zoho.com}}

\begin{abstract}
For each natural number $m\ge 3$, let $P_m(x)$ denote the generalized $m$-gonal number $\frac{(m-2)x^2-(m-4)x}{2}$ with $x\in\Z$. In this paper, with the help of the congruence theta function, we establish conditions on $a$, $b$, $c$ for which the sum $P_a(x)+P_b(y)+P_c(z)$ represents all but finitely many positive integers.
\end{abstract}

\maketitle

\section{Introduction}
\setcounter{lemma}{0}
\setcounter{theorem}{0}
\setcounter{corollary}{0}
\setcounter{remark}{0}
\setcounter{equation}{0}
\setcounter{conjecture}{0}
\setcounter{proposition}{0}

For each integer $m\ge 3$, generalized $m$-gonal numbers are those numbers of the form
$P_m(x)=\frac{(m-2)x^2-(m-4)x}{2}$ with $x\in\Z$. In 1638, Fermat first claimed that each natural number can
be written as the sum of 3 triangular numbers, 4 squares, 5 pentagonal numbers, and in general $m$ $m$-gonal
numbers (cf. \cite[Chapter I]{N}), and this claim was proved completely by Cauchy in 1813. In this direction,
many mathematicians have studied the problems of almost universal quadratic polynomials
(a quadratic polynomial is said to be almost universal, if
it represents all but finitely many positive integers over $\Z$). Guy \cite{Guy} studied the minimal number
$r_m\in\N$ chosen such that every natural number can be written as the sum of $r_m$ generalized $m$-gonal numbers. Guy pointed out that for sufficiently large integer $n$, one could likely
represent $n$ with significantly fewer generalized $m$-gonal numbers.
Shimura \cite{Shimura2} investigated the inhomogeneous quadratic polynomials and showed an explicit
formula for the number of representations of an integer as the sum of $n$ triangular numbers for each
$n$ in the range $2\le n\le8$.
Kane and Sun \cite{KS} obtained a classification of almost universal weighted sums of triangular numbers and more generally weighted mixed ternary sums of triangular
and square numbers, this classification was later completed by Chan and Oh \cite{WKCOH}
and Chan and Haensch \cite{WKCANNA}. A. Haensch \cite{ANNA} investigated the almost universal ternary quadratic polynomials with odd prime power conductor.

Recently, A. Haensch and B. Kane
\cite{AK} proved that for each $m\ge 3$ satisfying $m\not\equiv 2\pmod 3$ and $4\nmid m$, every sufficiently
large natural number can be written as the form $P_m(x)+P_m(y)+P_m(z)$ with $x,y,z\in \Z$. Obviously, the most general case of this old problem is to study the ternary sums of the type
\begin{equation*}
aP_l(x)+bP_m(y)+cP_n(z),
\end{equation*}
where $a,b,c\in \Z^+$ and $l,m,n\ge 3$.

Before the statement of the main results of this paper, we first see a concrete example, Sun \cite{Sun} proved that the polynomial $P_3(x)+P_4(y)+P_5(z)$ can represent all natural numbers over $\Z$ (Sun even showed that $y$ can be chosen be an even integer). Moreover, Sun \cite{Sun} listed all the possible
candidates of the form
$aP_l(x)+bP_m(y)+cP_n(z)$ with $a,b,c\ge 1$ that represent all the natural numbers over $\Z$.

Recently, via careful computation, Sun made the following conjecture.

\begin{conjecture}\label{Con1.1}
For any integer $m\ge 3$, $P_m(x)+P_{m+1}(y)+P_{m+2}(z)$ is almost universal.
\end{conjecture}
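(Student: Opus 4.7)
The plan follows the strategy suggested by the abstract: reduce the conjecture to a representation problem for a ternary quadratic form on a fixed coset of a lattice, and then apply the half-integral weight modular form machinery. Starting from the identity $8(k-2)P_k(t) = (2(k-2)t-(k-4))^2 - (k-4)^2$, the equation $n = P_m(x) + P_{m+1}(y) + P_{m+2}(z)$ is equivalent, after multiplying through by $8(m-2)(m-1)m$, to representing
\[
N := 8(m-2)(m-1)m\cdot n + (m-1)m(m-4)^2 + (m-2)m(m-3)^2 + (m-2)(m-1)(m-2)^2
\]
by the ternary quadratic form
\[
Q(X,Y,Z) := (m-1)m\,X^2 + (m-2)m\,Y^2 + (m-2)(m-1)\,Z^2,
\]
subject to the congruences $X\equiv -(m-4)\pmod{2(m-2)}$, $Y\equiv -(m-3)\pmod{2(m-1)}$, $Z\equiv -(m-2)\pmod{2m}$. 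The conjecture thereby becomes the statement that every sufficiently large $n$ yields an $N$ represented by $Q$ on this shifted lattice.

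The next step is to encode these representations as Fourier coefficients of the congruence theta series $\Theta(\tau) := \sum q^{Q(X,Y,Z)}$, summed over the integer solutions of the congruences. By Shimura's theory $\Theta$ is a modular form of weight $3/2$ on a congruence subgroup of level bounded in terms of $(m-2)(m-1)m$, carrying an explicit quadratic character. Decomposing $\Theta = E + C$ into its Eisenstein and cuspidal components, Siegel's product formula identifies the $N$-th Eisenstein coefficient $a_E(N)$ as a product of local densities $\prod_p \al_p(N,Q)$ times an archimedean factor of order $N^{1/2}$, while the Shimura correspondence together with Deligne's bound yields $|a_C(N)| \ll_{\ve} N^{1/2 - \da + \ve}$ for some $\da > 0$ on the genuine cuspidal part. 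Thus, once the $\al_p(N,Q)$ are positive and bounded below uniformly in $n$, the Eisenstein main term dominates and $r_Q(N) = a_E(N) + a_C(N)$ is strictly positive for all large $n$.

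Two obstacles remain, and these constitute the technical heart of the proof. The first is the local analysis at each prime $p \mid 2(m-2)(m-1)m$: one must verify that the $p$-adic equation $Q(X,Y,Z) = N$ has a solution in the prescribed coset for every sufficiently large $n$, and that the resulting $p$-adic density is bounded below uniformly. Since the shifts $m-4, m-3, m-2$ are three consecutive integers and the coefficients $(m-1)m, (m-2)m, (m-2)(m-1)$ share only small common factors, this analysis should close on a case-by-case basis modulo small primes, though it is delicate to carry out uniformly in all residue classes of $m$. The second, more serious, obstacle is that $\Theta$ will typically contain cuspidal components whose Shimura lifts are themselves Eisenstein series---arising from unary theta sublattices of the genus---so that these components do not satisfy Deligne's bound directly and must be separated and matched against corresponding pieces of $E$ before $|a_C(N)| = o(a_E(N))$ can legitimately be concluded. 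Handling these ``spurious'' cuspidal contributions in sufficient generality will, I expect, be where the proof demands the most delicate book-keeping.
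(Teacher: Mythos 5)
You should first be aware that the statement you set out to prove is presented in the paper as a \emph{conjecture} (attributed to Sun), not as a theorem: the paper does not prove it. What it proves is the partial result of Corollary \ref{m,m+1,m+2}: almost universality of $P_m(x)+P_{m+1}(y)+P_{m+2}(z)$ when $m\equiv 1\pmod 4$ or when a certain quadratic non-residue condition holds, and, for general $m$, only that $\mathcal{N}_{m,m+1,m+2}\setminus\mathcal{S}^2_{m,m+1,m+2}$ is finite. Your reduction (completing the square, the shifted lattice, the congruence theta series, the decomposition into Eisenstein part, unary theta part, and a cusp form orthogonal to the unary thetas, with Duke-type bounds on the last piece) is exactly the paper's framework, and that part of your outline is sound.

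The genuine gap is precisely the step you defer to ``delicate book-keeping,'' and your proposed remedy for it cannot work. The weight $3/2$ unary theta series $u_{h,t}(z)=\sum_{r\equiv h\ ({\rm mod}\ N/t)} r\,e(tr^2z)$ are honest cusp forms, orthogonal to the Eisenstein subspace, so there are no ``corresponding pieces of $E$'' to match them against. Their $N$-th coefficients are supported on $N=tr^2$ and have size $\asymp r\asymp\sqrt{N}$ there, which is exactly the order of the Eisenstein main term; hence for those $N$ no asymptotic comparison can decide positivity. The only available route is to show that the relevant coefficients of $\mathcal{U}$ vanish identically, which is an arithmetic statement about $l_n+v_{m,m+1,m+2}$: divisors $t>2$ of $N$ are ruled out because consecutive integers are coprime, $t=1$ is ruled out because $m,m+1,m+2$ are distinct mod $3$ forces $l_n+v\equiv 2\pmod 3$, but $t=2$ can only be ruled out under extra hypotheses (e.g.\ $m\equiv1\pmod4$ forces $l_n+v\equiv4\pmod 8$, hence $l_n+v\ne 2r^2$). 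For the remaining residue classes of $m$ the possibility $l_n+v_{m,m+1,m+2}=2r^2$ is exactly why the statement is still a conjecture, and your proposal does not close it.
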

Furthermore, Sun even conjectured that every sufficiently large integer can be represented as the form
$P_m(x)+P_{m+1}(y)+P_{m+2}(z)$
with $x,y,z\in\N$.

Motivated by the above, in this paper, we concentrate on the sums of the form
\begin{equation*}
\mathcal{F}_{a,b,c}(x,y,z):= P_a(x)+P_b(y)+P_c(z).
\end{equation*}
Now, we state the main results of this paper. Set
\begin{align*}
l_n=&2^{3-\delta}(a-2)(b-2)(c-2)n,
\\N=&2^{(-\delta+2)/2}(a-2)(b-2)(c-2),
\\v_{a,b,c}=&2^{-\delta}((a-4)^2(b-2)(c-2)+(a-2)(b-4)^2(c-2)+(a-2)(b-2)(c-4)^2),
\\\mathcal{S}_{a,b,c}^t=&\{n\in\N: l_n+v_{a,b,c}=tr^2\ \text{for some $r\in\N$}\},
\\\mathcal{N}_{a,b,c}=&\{n\in\N: \text{$n$ can not be represented by}\ \mathcal{F}_{a,b,c}(x,y,z)\},
\end{align*}
where $t\in\N$ and
$$\delta=\begin{cases}0&\mbox{if}\ 2\nmid(a,b,c),\\2&\mbox{if}\ 2\mid(a,b,c).\end{cases}$$

\begin{theorem}\label{Thm1.1}
Let notations be as above, if at least one of $a,b,c$ is not divisible by $4$, then we have the following results.

{\rm (1)} Any sufficiently large integer $n\not\in\mathcal{S}_{a,b,c}$ can be represented by $\mathcal{F}_{a,b,c}(x,y,z)$,
where
$$\mathcal{S}_{a,b,c}=\bigcup_{\substack{t\mid N\\ \text{t squarefree}}}
\mathcal{S}_{a,b,c}^t.$$

{\rm (2)} Suppose that the greatest common divisor of any two of $a-2,b-2,c-2$ is of the form
$2^r$ with $r\in\N$, then $\mathcal{F}_{a,b,c}(x,y,z)$ is almost universal if all of the following conditions
are satisfied:

{\rm (i)} There exists an odd prime $p$ such that one of $a-2,b-2,c-2$ is divisible by $p$ and the product
of the remaining two numbers is not a quadratic residue modulo $p$.

{\rm (ii)} When $2\mid abc$, then there exists an odd prime $q$ such that one of $a-2,b-2,c-2$ is divisible of
$q$ and the product of $2$ and the remaining two numbers is not a quadratic residue modulo $q$.

\end{theorem}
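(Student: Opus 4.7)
My plan is to recast the problem as one about a ternary quadratic form with congruence conditions and then apply the theory of half-integral weight modular forms. The reduction is elementary: completing the square in each variable of $\mathcal{F}_{a,b,c}(x,y,z) = n$ and setting
\[
X = 2(a-2)x - (a-4), \quad Y = 2(b-2)y - (b-4), \quad Z = 2(c-2)z - (c-4),
\]
the equation becomes $(b-2)(c-2)X^2 + (a-2)(c-2)Y^2 + (a-2)(b-2)Z^2 = l_n + v_{a,b,c}$ when $\delta = 0$, with a parallel rescaling by $4$ when $\delta = 2$, and $X, Y, Z$ constrained to fixed residue classes modulo $2(a-2), 2(b-2), 2(c-2)$ respectively. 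So representing $n$ by $\mathcal{F}_{a,b,c}$ is equivalent to representing $l_n + v_{a,b,c}$ by the above diagonal ternary form $Q$ with the stated congruences on $(X,Y,Z)$.

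The generating series for these ``congruence'' representations is, by Shimura--Niwa--Cipra, a holomorphic modular form of weight $3/2$ on a suitable congruence subgroup with a quadratic character. Decompose it as $E + f$ (Eisenstein plus cuspidal). Via the Shimura lift and Deligne's bound, the $n$th cuspidal Fourier coefficient is $O(n^{1/2 - \eta})$ for some fixed $\eta > 0$. The $n$th Eisenstein coefficient is essentially the product of local densities $\prod_p \alpha_p$: at primes $p \nmid 2N$ the form is non-degenerate mod $p$ and $\alpha_p > 0$ by Hensel's lemma, and at $p \mid 2N$ direct $p$-adic computations pinpoint exactly when $\alpha_p$ vanishes. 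Assembling the bad-prime conditions, the product $\prod_p \alpha_p$ vanishes precisely when $l_n + v_{a,b,c} = tr^2$ for a squarefree $t \mid N$, that is, exactly on $\mathcal{S}_{a,b,c}$. A standard lower bound then gives the Eisenstein coefficient $\gg n^{1/2 - \varepsilon}$ off $\mathcal{S}_{a,b,c}$, which dominates the cuspidal error for large $n$ and yields part~(1).

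For part~(2) the task is to show $\mathcal{S}_{a,b,c}$ is finite under the extra hypotheses. The gcd assumption on pairs from $\{a-2, b-2, c-2\}$ sharply restricts which squarefree $t \mid N$ can be the squarefree part of $l_n + v_{a,b,c}$. Condition~(i) rules out odd squarefree $t$: reducing $l_n + v_{a,b,c} = t r^2$ modulo the odd prime $p \mid a-2$ (say), and using $v_{a,b,c} \equiv 2^{-\delta} \cdot 4(b-2)(c-2) \pmod p$ together with the quadratic non-residue hypothesis on $(b-2)(c-2)$, forces a contradiction. Condition~(ii) performs the analogous role for even squarefree $t$, which can arise only when $2 \mid abc$; the extra factor of $2$ in its statement matches the factor of $2$ absorbed into $t$.

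The principal obstacle I anticipate is the dyadic local-density computation: $\alpha_2$ splits into several subcases depending on the parities of $a, b, c$ and on $\delta$, and the running hypothesis ``at least one of $a, b, c$ is not divisible by $4$'' is exactly what is needed to keep the $2$-adic analysis manageable. The modular-form inputs (Shimura lift, Deligne, lower bounds for Eisenstein coefficients) are essentially standard, but matching the vanishing locus of $\prod_p \alpha_p$ to the clean description of $\mathcal{S}_{a,b,c}$ requires careful bookkeeping combining the $2$-adic contribution with the odd-prime ones, and the verification in part~(2) that conditions (i) and (ii) together exclude all but finitely many $n$ from the exceptional set will rely on these same explicit local computations.
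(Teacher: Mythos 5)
Your reduction to a congruence ternary form and the general modular-forms framework are the right starting point, but the analytic core of your argument is wrong in a way that matters. You decompose the weight $3/2$ theta series as $E+f$ and invoke the Shimura lift plus Deligne to get $O(n^{1/2-\eta})$ for the cuspidal coefficients. This fails precisely for weight $3/2$: the cuspidal space contains unary theta functions $\sum r\,e(tr^2z)$, whose Shimura lifts are Eisenstein series (not cusp forms), so Deligne does not apply, and their coefficients are of size $n^{1/2}$ on the square classes $tr^2$ --- exactly the same order as the Eisenstein main term. The correct decomposition, which the paper uses following Duke--Schulze-Pillot, is $\theta_{L+v}=\mathcal{E}+\mathcal{U}+f$ with $\mathcal{U}$ the unary theta part and $f$ orthogonal to it; only then does Duke's bound $O(n^{3/7+\ve})$ apply to $f$. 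The exceptional set $\mathcal{S}_{a,b,c}$ is the set where $\mathcal{U}$ can have a nonzero coefficient (a coefficient of $u_{h,t}$ is supported on $l_n+v_{a,b,c}=tr^2$), \emph{not} the vanishing locus of the local densities. Indeed, Lemma \ref{Lem2.1} of the paper shows that under the hypothesis ``at least one of $a,b,c$ not divisible by $4$'' there is \emph{no} local obstruction at any prime, so $\prod_p\alpha_p$ never vanishes; your claim that the local densities vanish exactly on $\mathcal{S}_{a,b,c}$ cannot be right, since membership in a global square class $tr^2$ is not a condition detectable by finitely many local densities.

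A second omission: the lower bound $a_{\mathcal{E}}(n)\gg n^{1/2-\ve}$ for the Eisenstein coefficient requires not only local representability but also \emph{bounded divisibility of $l_n+v_{a,b,c}$ at the anisotropic primes} of the quadratic space. Verifying this (or showing the relevant completion is isotropic) is the bulk of the paper's technical work (Lemmas \ref{isotropic odd} and \ref{isotropic 2}, with an extensive $2$-adic case analysis using the Hasse symbol criterion of Lemma \ref{Cassels}); your proposal does not address it. Your treatment of part (2) is closer to the mark in spirit, but note the mechanism: the gcd hypothesis forces $p\nmid l_n+v_{a,b,c}$ for every odd $p\mid N$, which already eliminates all squarefree $t$ except $t\in\{1,2\}$; condition (i) then rules out $t=1$ and condition (ii) rules out $t=2$ via the congruence $l_n+v_{a,b,c}\equiv 2^{2-\delta}(b-2)(c-2)\pmod p$ you correctly identified.
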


\begin{remark}\label{Rem1.1}
In the proof of Theorem \ref{Thm1.1}, we actually offer an explicit method to show that $\mathcal{F}_{a,b,c}(x,y,z)$ is almost universal for some particular $(a,b,c)\in\Z^3$.

We will see in Lemma \ref{Lem2.1} that if one of $a,b,c$ is not divisible by $4$, then
there is no local obstruction to $\mathcal{F}_{a,b,c}(x,y,z)$ being almost universal. However,
there is a local obstruction to $\mathcal{F}_{a,b,c}$ being almost universal if
$a\equiv b\equiv c\equiv 0\pmod 4$.
\end{remark}

For each prime $p\equiv\pm1\pmod8$, $2$ is a quadratic residue modulo $p$,
by virtue of Theorem \ref{Thm1.1}, we can easily get the following results.

\begin{corollary}\label{(i)}
Let notations and assumptions be as in {\rm (2)} of Theorem \ref{Thm1.1}, if there is a prime $p$ with
$p\equiv\pm1\pmod8$ satisfying the condition {\rm (i)} of {\rm (2)} of Theorem \ref{Thm1.1}, then
$\mathcal{F}_{a,b,c}$ is almost universal.
\end{corollary}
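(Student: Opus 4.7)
The plan is to derive the corollary as an immediate consequence of Theorem~\ref{Thm1.1}(2) by showing that, under the extra hypothesis $p\equiv\pm1\pmod 8$, condition~(ii) of that theorem is automatically implied by condition~(i). The key ingredient is the second supplementary law $\left(\frac{2}{p}\right)=1$ whenever $p\equiv\pm1\pmod 8$, which lets us use the \emph{same} prime as witness for both non-residue conditions.

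Concretely, I would first dispose of the trivial case $2\nmid abc$, where condition~(ii) is vacuous and the statement follows at once from Theorem~\ref{Thm1.1}(2). In the remaining case $2\mid abc$, I would take the prime $p\equiv\pm1\pmod 8$ provided by the assumption; without loss of generality suppose $p\mid a-2$, and write $A=b-2$, $B=c-2$ for the other two numbers, so that condition~(i) reads $\left(\frac{AB}{p}\right)=-1$. I propose to verify condition~(ii) with $q=p$: by multiplicativity of the Legendre symbol together with $\left(\frac{2}{p}\right)=1$, one gets
\[
\left(\frac{2AB}{p}\right)=\left(\frac{2}{p}\right)\left(\frac{AB}{p}\right)=-1,
\]
so $p$ indeed witnesses condition~(ii) as well. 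Hence both hypotheses of Theorem~\ref{Thm1.1}(2) are in force and the almost universality of $\mathcal{F}_{a,b,c}$ follows.

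There is no genuine obstacle here: the corollary is essentially a repackaging of the main theorem, in which the single prime $p\equiv\pm1\pmod 8$ plays the role of both $p$ and $q$. The only thing worth recording explicitly is the case split on the parity of $abc$, and the standing assumption (from part~(2) of Theorem~\ref{Thm1.1}) that the pairwise gcd's of $a-2,b-2,c-2$ are $2$-powers, which is inherited directly and needs no further verification.
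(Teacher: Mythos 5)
Your proposal is correct and matches the paper's intended argument exactly: the paper proves this corollary with the single remark that $2$ is a quadratic residue modulo any prime $p\equiv\pm1\pmod 8$, so the same prime witnesses condition (ii) via multiplicativity of the Legendre symbol, just as you write. Your explicit case split on the parity of $abc$ is a harmless elaboration of what the paper leaves implicit.
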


\begin{proposition}\label{Pro1.1}
Let notations be as above, if at least one of $a,b,c$ is not divisible by $4$ and the greatest common divisor of any two of $a-2,b-2,c-2$ is of the form $2^r$ with $r\in\N$, then we have the following results.

{\rm (1)} Suppose that $a,b,c$ are distinct modulo $3$,
then $\mathcal{F}_{a,b,c}(x,y,z)$ is almost universal, if one of the following conditions is satisfied:

{\rm (i)} $2\nmid abc$,

{\rm (ii)} one of $a,b,c$ is congruent to $2$ modulo $4$ and the remaining two
numbers are odd or are all
divisible by $4$,

{\rm (iii)} one of $a,b,c$ is divisible by $4$ and the remaining two odd integers of $a,b,c$ have same residues modulo $4$.

{\rm (2)} Suppose that one of the above conditions {\rm (i), (ii), (iii)} is satisfied, if $a\equiv b\equiv c\not\equiv 2\pmod3$,
then any sufficiently large integer $n$ with $n\equiv a+1\pmod3$ can be represented by $\mathcal{F}_{a,b,c}(x,y,z)$.
\end{proposition}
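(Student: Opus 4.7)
The strategy is to apply Theorem \ref{Thm1.1}(1) after showing that $\mathcal{S}_{a,b,c}$ is empty in Part (1), or disjoint from $\{n \in \N : n \equiv a+1 \pmod 3\}$ in Part (2). Since $\mathcal{S}_{a,b,c} = \bigcup_{t \mid N,\, t \text{ squarefree}} \mathcal{S}_{a,b,c}^t$, I analyze obstructions to the equation $l_n + v_{a,b,c} = tr^2$ for each squarefree $t \mid N$, in three steps.

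First, I reduce to $t \in \{1, 2\}$. If an odd prime $p$ divides $t$, then $p \mid N$ forces $p \mid (a-2)(b-2)(c-2)$, and the gcd hypothesis forces $p$ to divide exactly one factor, say $a-2$. Then $l_n \equiv 0 \pmod p$, and using
$$2^\delta v_{a,b,c} = (a-4)^2(b-2)(c-2) + (a-2)(b-4)^2(c-2) + (a-2)(b-2)(c-4)^2,$$
the last two terms vanish mod $p$, leaving $2^\delta v_{a,b,c} \equiv 4(b-2)(c-2) \not\equiv 0 \pmod p$. Hence $l_n + v_{a,b,c} \not\equiv 0 \pmod p$, contradicting $p \mid tr^2$.

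Second, I rule out $t = 1$ via mod $3$. In Part (1), distinctness of $a, b, c \pmod 3$ gives (after relabelling) $a \equiv 2 \pmod 3$, so the preceding computation with $p = 3$ yields $v_{a,b,c} \equiv 2 \pmod 3$ and $l_n \equiv 0 \pmod 3$, giving $l_n + v_{a,b,c} \equiv 2 \pmod 3$, a non-residue. In Part (2), one verifies directly that $v_{a,b,c} \equiv 0 \pmod 3$ (when $a \equiv b \equiv c \equiv 1 \pmod 3$ each term has a factor $(x-4)^2 \equiv 0$; when $\equiv 0 \pmod 3$ the three terms each reduce to $1$ and sum to $0$), and that $l_n \equiv \pm n \pmod 3$; the hypothesis $n \equiv a+1 \pmod 3$ then produces $l_n + v_{a,b,c} \equiv 2 \pmod 3$ again.

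Third, I exclude $t = 2$. Under condition (i) all of $a, b, c$ are odd, so $v_{a,b,c}$ is a sum of three odd numbers (hence odd) while $l_n$ is even, ruling out $l_n + v_{a,b,c} = 2r^2$. Under (ii) or (iii), a direct mod-$8$ computation yields $v_{a,b,c} \equiv 4 \pmod 8$ and $l_n \equiv 0 \pmod 8$, so $l_n + v_{a,b,c} \equiv 4 \pmod 8$; this forces $r^2 \equiv 2 \pmod 4$, impossible. Combined with Theorem \ref{Thm1.1}(1), the three steps yield both conclusions. The main technical obstacle is the mod-$8$ analysis of $v_{a,b,c}$ in cases (ii), (iii): in case (iii) the hypothesis $b \equiv c \pmod 4$ is essential, forcing $b+c-4 \equiv 2 \pmod 4$ so that $(a-2)[(b-4)^2(c-2)+(b-2)(c-4)^2] \equiv 4 \pmod 8$ while $(a-4)^2(b-2)(c-2) \equiv 0 \pmod{16}$ (as $4 \mid a-4$); in case (ii) one relies on $(a-4)^2 \equiv 4 \pmod{32}$ (since $a-4 = 2\cdot\text{odd}$) to get the dominant $\equiv 4 \pmod 8$ contribution from the first term, with the other two terms summing to $\equiv 0 \pmod 8$ (either each vanishing mod $8$ when $8 \mid a-2$, or contributing $4+4 = 8$ when $8 \nmid a-2$).
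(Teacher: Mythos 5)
Your proposal is correct and follows essentially the same route as the paper: invoke the decomposition behind Theorem \ref{Thm1.1}, use the gcd hypothesis to reduce to the unary theta functions with $t\in\{1,2\}$, kill $t=1$ by showing $l_n+v_{a,b,c}\equiv 2\pmod 3$, and kill $t=2$ by showing $l_n+v_{a,b,c}$ is odd under (i) and $\equiv 4\pmod 8$ under (ii)--(iii). The paper states these congruences with "one may easily verify"; your plan supplies the verifications, and they check out (including the two subcases of (ii) and the use of $b\equiv c\pmod 4$ in (iii)).
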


Analogous to the above concrete example $P_3(x)+P_4(y)+P_5(z)$, we have the following corollary:

\begin{corollary}\label{m,m+1,m+2}
For each integer $m\ge 3$, we have

{\rm (1)} $P_m(x)+P_{m+1}(y)+P_{m+2}(z)$ is almost universal if one of the following conditions is satisfied:

{\rm (i)} $m\equiv 1\pmod4$.

{\rm (ii)} There exists an odd prime $q$ such that one of $m-2,m-1,m$ is divisible of
$q$ and the product of $2$ and the remaining two numbers is not a quadratic residue modulo $q$.

{\rm (2)} $\mathcal{N}_{m,m+1,m+2}\setminus \mathcal{S}_{m,m+1,m+2}^2$ is a finite set.
\end{corollary}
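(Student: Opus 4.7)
The plan is to deduce the Corollary by specializing Theorem~\ref{Thm1.1} and Proposition~\ref{Pro1.1} to the triple $(a,b,c)=(m,m+1,m+2)$. Before branching into cases I will check the blanket hypotheses shared by those results: among three consecutive integers at least one is not divisible by $4$, and the pairwise greatest common divisors of $m-2,m-1,m$ divide $2$ (two pairs differ by $1$, one pair by $2$), so all are of the form $2^{r}$. In particular $\delta=0$ and $N=2(m-2)(m-1)m$.

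For (1)(i), under $m\equiv 1\pmod{4}$ I will invoke condition (ii) of Proposition~\ref{Pro1.1}(1): the integers $m,m+1,m+2$ are automatically distinct modulo $3$, the middle entry satisfies $m+1\equiv 2\pmod{4}$, and the outer entries $m$ and $m+2$ are both odd. For (1)(ii) I will apply Theorem~\ref{Thm1.1}(2). Since $2\mid abc$ is automatic for three consecutive integers, hypothesis (ii) of the Corollary is exactly hypothesis (ii) of Theorem~\ref{Thm1.1}(2); it remains to verify hypothesis (i). Taking $p=3$, which divides exactly one of $m-2,m-1,m$, a short enumeration of the three sub-cases shows that the product of the remaining two entries is always $\equiv 2\pmod{3}$, hence a non-residue.

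For part (2) I aim to reduce to Theorem~\ref{Thm1.1}(1) by proving $\mathcal{S}_{m,m+1,m+2}=\mathcal{S}^{2}_{m,m+1,m+2}$. Every squarefree divisor of $N$ other than $1$ and $2$ has an odd prime factor $p$ dividing $(m-2)(m-1)m$. A direct substitution into the definition of $v_{a,b,c}$ will give $v\equiv 8\pmod{p}$ when $p\mid m-2$ or $p\mid m$, and $v\equiv -4\pmod{p}$ when $p\mid m-1$; since $p$ is odd this forces $p\nmid v$, whereas $p\mid l_{n}$. Hence $p\nmid l_{n}+v$, ruling out any equation $l_{n}+v=tr^{2}$ with $p\mid t$ and forcing $\mathcal{S}^{t}=\emptyset$. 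To eliminate $t=1$ I will compute $v$ modulo $3$ in each of the residue classes $m\equiv 0,1,2\pmod{3}$; in every class exactly one of the three monomials of $v$ survives, and that survivor turns out to be $\equiv 2\pmod{3}$. Combined with $l_{n}\equiv 0\pmod{3}$ (because $(m-2)(m-1)m$ is a product of three consecutive integers), this yields $l_{n}+v\equiv 2\pmod{3}$, which is not a quadratic residue, so $\mathcal{S}^{1}=\emptyset$.

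The only genuinely computational content is the family of residue identities $v\equiv 8\pmod{p}$, $v\equiv -4\pmod{p}$, and $v\equiv 2\pmod{3}$; each is a finite enumeration, and I anticipate no obstacle beyond the bookkeeping of cases. All remaining steps are either hypothesis verification or direct appeals to Theorem~\ref{Thm1.1} and Proposition~\ref{Pro1.1}.
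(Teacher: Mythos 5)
Your proposal is correct and follows essentially the same route as the paper: part (1)(i) via condition (ii) of Proposition \ref{Pro1.1}(1), part (1)(ii) via Theorem \ref{Thm1.1}(2), and part (2) by showing that every $\mathcal{S}^t_{m,m+1,m+2}$ with $t\ne 2$ is empty (odd prime factors of $t$ are excluded because $p\nmid l_n+v$, and $t=1$ because $l_n+v\equiv 2\pmod 3$), so that Theorem \ref{Thm1.1}(1) applies. The only difference is that you make explicit two verifications the paper leaves implicit --- that hypothesis (i) of Theorem \ref{Thm1.1}(2) holds automatically with $p=3$, and the residues $v\equiv 8$ or $-4\pmod p$ --- and these computations check out.
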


For each integer $k\ge 1$, the $k$-th Fermat's number is defined to be
$F_k:=2^{2^k}+1$, it is well known that any two different Fermat's numbers are coprime. On the other hand,
in mathematics, a Mersenne prime is a prime number the form $M_p:=2^p-1$ for some prime $p$. It is well known
that the largest known prime number $2^{77232917}-1$ is a Mersenne prime.
Then by (2) of Proposition \ref{Pro1.1}, we can easily get the following results
involving Fermat's numbers and Mersenne primes.

\begin{corollary}\label{Fermat and Mersenne}
Let notations be as above, we have the following results:

{\rm (1)} For any sufficiently large integer $n$ with $n\equiv 2\pmod 3$, we can write $n$ as the form
\begin{equation*}
P_{F_k+2}(x)+P_{F_l+2}(y)+P_{F_m+2}(z)
\end{equation*}
with $x,y,z\in\Z$, where $k,l,m$ are pairwisely distinct positive integers.

{\rm (2)} For any sufficiently large integer $n$ with $n\equiv 1\pmod 3$, we can write $n$ as the form
\begin{equation*}
P_{M_p+2}(x)+P_{M_q+2}(y)+P_{M_r+2}(z)
\end{equation*}
with $x,y,z\in\Z$, where $p,q,r$ are distinct odd primes.
\end{corollary}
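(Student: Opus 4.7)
The plan is to derive both parts directly from Proposition \ref{Pro1.1}(2), applied to $(a,b,c)=(F_k+2, F_l+2, F_m+2)$ in part (1) and to $(a,b,c)=(M_p+2, M_q+2, M_r+2)$ in part (2). For each family I will verify its four hypotheses: (a) at least one of $a, b, c$ is not divisible by $4$; (b) every pairwise $\gcd$ of $a-2, b-2, c-2$ has the form $2^r$; (c) one of the conditions (i)--(iii) of Proposition \ref{Pro1.1}(1) holds; and (d) the common residue of $a, b, c$ modulo $3$ is not $2$. Once these are checked, Proposition \ref{Pro1.1}(2) delivers the representability of sufficiently large $n$ with $n \equiv a+1 \pmod 3$, and the only remaining task is to identify this residue class in each case.

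For part (1), take $(a,b,c)=(F_k+2, F_l+2, F_m+2)$ with distinct positive integers $k, l, m$. Every Fermat number $F_j = 2^{2^j}+1$ is odd, so $a, b, c$ are all odd; this yields (a) and $2 \nmid abc$, which is condition (i) of Proposition \ref{Pro1.1}(1), settling (c). The classical coprimality of distinct Fermat numbers (from the identity $F_0 F_1 \cdots F_{k-1} = F_k - 2$) gives $\gcd(a-2, b-2) = \gcd(a-2, c-2) = \gcd(b-2, c-2) = 1 = 2^0$, establishing (b). For (d), since $2^{2^j} = 4^{2^{j-1}} \equiv 1 \pmod 3$ for every $j \ge 1$, we get $F_j \equiv 2 \pmod 3$ and hence $a \equiv b \equiv c \equiv 1 \pmod 3$; therefore $a + 1 \equiv 2 \pmod 3$, matching the claimed congruence class.

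For part (2), take $(a,b,c)=(M_p+2, M_q+2, M_r+2)$ with distinct odd primes $p, q, r$. Each $M_s = 2^s - 1$ is odd, so again $a, b, c$ are odd, and (a), (c) follow exactly as before. The standard identity $\gcd(2^s-1, 2^t-1) = 2^{\gcd(s,t)}-1$, combined with $\gcd(p,q) = 1$ for distinct primes, yields pairwise $\gcd$ equal to $1$, giving (b). For an odd prime $s$, $2^s \equiv -1 \pmod 3$, so $M_s \equiv 1 \pmod 3$ and $a \equiv b \equiv c \equiv 0 \pmod 3$; hence $a + 1 \equiv 1 \pmod 3$, the desired class. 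I anticipate no genuine obstacle here: the argument is a straightforward hypothesis check, the substantive ingredients being the two elementary congruences $F_k \equiv 2 \pmod 3$ and $M_s \equiv 1 \pmod 3$ and the pairwise coprimality of Fermat (resp.\ Mersenne) numbers, with the real content already absorbed by Proposition \ref{Pro1.1}.
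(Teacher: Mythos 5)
Your proposal is correct and follows exactly the route the paper intends: the paper derives Corollary \ref{Fermat and Mersenne} by a direct application of Proposition \ref{Pro1.1}(2), and your verification of its hypotheses (oddness giving condition (i), pairwise coprimality of Fermat resp.\ Mersenne numbers giving the $2^r$-gcd condition, and the congruences $F_j\equiv 2$, $M_s\equiv 1 \pmod 3$ identifying the residue class $a+1$) is precisely the "easy" check the paper leaves to the reader.
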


Let $\alpha,\beta,\gamma$ be positive odd integers, and let $k,m,l$ be natural numbers satisfying
$l\ge k\ge m\ge2$, further we put $2^k\alpha-2=2\ve$, $2^l\beta-2=2\eta$ and $2^m\gamma-2=2\mu$.
Then we have the following results.

\begin{corollary}\label{2^k}
Let notations be as above, suppose that $\alpha,\beta,\gamma$ are pairwisely coprime, then
$P_{2^k\alpha+2}(x)+P_{2^l\beta+2}(y)+P_{2^m\gamma+2}(z)$ is almost universal, if one of the following conditions is
satisfied:

{\rm (i)} $\alpha,\beta,\gamma$ are distinct modulo $3$ and $k=l=m$,

{\rm (ii)} $\alpha,\beta,\gamma$ are distinct modulo $3$, $k>l$ and $k\equiv l\equiv m\pmod2$,

{\rm (iii)} $3\mid\gamma$, $\alpha\equiv\beta\equiv1\pmod{12}$, $k=l>m$, $k-m>1$ and $k\not\equiv m\pmod2$.
\end{corollary}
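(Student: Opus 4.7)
\emph{Plan.} My approach is to derive this corollary from Theorem~\ref{Thm1.1}. First I verify the common hypotheses: since $k,l,m\ge 2$ and $\alpha,\beta,\gamma$ are odd, all of $a=2^k\alpha+2$, $b=2^l\beta+2$, $c=2^m\gamma+2$ are $\equiv 2\pmod 4$, so none is divisible by $4$; and pairwise coprimality of $\alpha,\beta,\gamma$ yields $\gcd(a-2,b-2)=\gcd(2^k\alpha,2^l\beta)=2^{\min(k,l)}$, and similarly for the other two pairs, so the gcd of any two of $a-2,b-2,c-2$ is a pure power of $2$. Thus the structural hypotheses of Theorem~\ref{Thm1.1}(2) hold.

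In cases~(i) and~(ii), the assumption that $\alpha,\beta,\gamma$ are distinct modulo $3$ means exactly one of them, say $\gamma$, is divisible by $3$, while $\{\alpha,\beta\}\equiv\{1,2\}\pmod 3$. Taking $p=3$ in condition~(i) of Theorem~\ref{Thm1.1}(2) gives $(a-2)(b-2)=2^{k+l}\alpha\beta\equiv 2\cdot 2^{k+l}\pmod 3$, and the parity hypothesis ($k=l=m$ in case~(i); $k\equiv l\equiv m\pmod 2$ in case~(ii)) makes $k+l$ even, so this is the nonresidue $2\pmod 3$. For condition~(ii) of Theorem~\ref{Thm1.1}(2), the natural choice $q=3$ yields $2(a-2)(b-2)\equiv 1\pmod 3$, which is a residue; I would therefore finish the proof via Part~(1) of Theorem~\ref{Thm1.1}, showing that $\mathcal{S}^t_{a,b,c}=\emptyset$ for every squarefree $t\mid N$. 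The three main tools are: (a)~a direct $2$-adic calculation of $v_{a,b,c}$ that forces $v_2(l_n+v_{a,b,c})$ to have a prescribed parity, ruling out $t$ of incompatible $2$-adic valuation; (b)~the elementary identity $\alpha\beta+\beta\gamma+\alpha\gamma\equiv 3\pmod 4$ for all odd $\alpha,\beta,\gamma$, which follows from $(\alpha+\beta+\gamma)^2-(\alpha^2+\beta^2+\gamma^2)\equiv -2\pmod 8$, obstructing $t=1$; and (c)~for any odd prime $p\mid\alpha\beta\gamma$, the observation that only the term of $v_{a,b,c}$ whose polynomial factor is coprime to $p$ survives modulo $p$, so $v_{a,b,c}\not\equiv 0\pmod p$ while $l_n\equiv 0\pmod p$, giving a direct mod-$p$ obstruction whenever $p\mid t$.

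Case~(iii) is the main obstacle. Here $3\mid\gamma$ and $\alpha\equiv\beta\equiv 1\pmod{12}$ give $(a-2)(b-2)\equiv 2^{2k}\cdot 1\equiv 1\pmod 3$, a residue, so $p=3$ does not verify Theorem~\ref{Thm1.1}(2)(i). I would again invoke Part~(1) of Theorem~\ref{Thm1.1}. A sharper $2$-adic analysis, using the expansion $(2^{k-1}\alpha-1)^2\equiv 1-2^k\alpha\pmod{2^{k+1}}$ together with $k=l$, $k-m\ge 3$, $k\not\equiv m\pmod 2$, and $\alpha+\beta\equiv 2\pmod 4$ (which follows from $\alpha\equiv\beta\equiv 1\pmod 4$), should pin down $v_2(v_{a,b,c})$ exactly and yield the required $2$-adic obstructions; the odd-prime obstructions from (c) above still apply. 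The technical heart is to verify that these $2$-adic and mod-$p$ obstructions, taken together, exhaust every squarefree $t\mid N$, and that the precise combination of hypotheses in case~(iii) is what makes this exhaustion possible.
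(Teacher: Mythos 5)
Your treatment of cases (i) and (ii) is sound and is essentially the paper's own argument: pairwise coprimality of $\alpha,\beta,\gamma$ reduces the possible unary theta contributions to $t\in\{1,2\}$, the even $2$-adic valuation of $l_n+v_{a,b,c}$ rules out $t=2$, and the residue $2\pmod 3$ rules out $t=1$. Your observation that condition (ii) of Theorem \ref{Thm1.1}(2) cannot be verified with $q=3$, so that one must fall back on Part (1) of Theorem \ref{Thm1.1} and kill $t=2$ by a direct valuation computation, is correct and matches what the paper actually does.

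The gap is in case (iii). The obstructions you list there --- ${\rm ord}_2(l_n+v_{a,b,c})=k+m+1$ even (your tool (a)), and $p\nmid l_n+v_{a,b,c}$ for every odd prime $p\mid\alpha\beta\gamma$ (your tool (c)) --- eliminate every squarefree $t\mid N$ \emph{except} $t=1$, and nothing in your plan excludes $t=1$, i.e.\ shows that $l_n+v_{a,b,c}$ is never a perfect square. Tool (b) is tied to the configuration $k=l=m$ of case (i) and does not transfer: writing $v_{a,b,c}=2^{k+m+1}\gamma\frac{\beta\ve^2+\alpha\eta^2}{2}+2^{2k}\alpha\beta\mu^2$, the odd part of $l_n+v_{a,b,c}$ is congruent to $\gamma\cdot\frac{\alpha+\beta}{2}$ modulo small powers of $2$, and the hypothesis $\alpha\equiv\beta\equiv1\pmod{12}$ does not force this away from $1\pmod 8$ (try $\alpha=13$, $\beta=25$, $\gamma=3$). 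The mod $3$ route also fails in this case: since $3\mid\gamma$ and $\alpha\equiv\beta\equiv1\pmod 3$, one finds $l_n+v_{a,b,c}\equiv 2^{2k}\alpha\beta\mu^2\equiv1\pmod 3$, a quadratic residue. (Be aware that the paper's own proof asserts $l_n+v_{a,b,c}\equiv-1\pmod 3$ at this point; that congruence does not appear to follow from the stated hypotheses, so the difficulty you defer is real and not a routine verification.) In short, "the technical heart" you postpone --- checking that the $2$-adic and mod-$p$ obstructions exhaust all squarefree $t\mid N$ --- is precisely where the argument is incomplete: they do not exhaust $t=1$.
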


We now give an outline of this paper, in Section 2,
we will give a brief overview of the theory of ternary quadratic forms and
congruence theta function which we need in our proofs and prove some useful lemmas, finally we shall prove the
main results of this paper in Section 3.
\maketitle
\section{Some Lemmas}
\setcounter{lemma}{0}
\setcounter{theorem}{0}
\setcounter{corollary}{0}
\setcounter{remark}{0}
\setcounter{equation}{0}
\setcounter{conjecture}{0}

In this section, we first introduce some necessary objects used in our proofs and then clarify the connections
between congruence theta function and representation of natural numbers by $\mathcal{F}_{a,b,c}(x,y,z)$.
\subsection{Shifted Lattice Theory}
In this paper, we adopt the language of quadratic space and lattices as in \cite{Ki}, given a
positive definite quadratic space $(V,B,Q)$, let $L$ be a lattice on $V$ and $A$ be a symmetric matrix,
we denote $L\cong A$ if $A$ is the gram matrix for $L$ with respect to some basis. In particular,
an $n\times n$ diagonal matrix with $a_1,...,a_n$ as the diagonal entries is written as $\langle a_1,...,a_n\rangle$. For each place $p$ of $\Q$, we define the localization of $L$ by
$L_p=L\otimes_{\Z}\Z_p$, especially, $L_{\infty}=V\otimes_{\Q}\R$.

For a vector $v\in V$, we have a lattice coset $L+v$, similar to the definitions of
$gen(L),\ spn(L),\ cls(L)$ (cf. \cite[pp. 132--133]{Ki}), we have the following definitions that
originally appear in \cite{V}.

The class of $L+v$ is defined as
\begin{align*}
cls(L+v):= \text{the orbit of $L+v$ under the action of $O(V)$},
\end{align*}

the spinor genus of $L+v$ is defined as
\begin{align*}
spn(L+v):= \text{the orbit of $L+v$ under the action of $O(V)O_{\mathbb{A}}^{'}(V)$},
\end{align*}

the genus of $L+v$ is defined as
\begin{align*}
gen(L+v):= \text{the orbit of $L+v$ under the action of $O_{\mathbb{A}}(V)$},
\end{align*}
the definitions of the adelization groups $O(V)$, $O_{\mathbb{A}}^{'}(V)$, $O_{\mathbb{A}}(V)$ may be
found in \cite[p. 132]{Ki}.

We now consider the representation of a natural number $n$ by $\mathcal{F}_{a,b,c}(x,y,z)$, by completing the
square, it is easy to see that $n$ can be represented by $\mathcal{F}_{a,b,c}(x,y,z)$ if and only if
$l_n+v_m$ can be represented by the lattice coset $L+v$, where
\begin{equation}\label{lattice}
L=\begin{cases}4(a-2)(b-2)(c-2)\langle a-2,\ b-2,\ c-2\rangle&\mbox{if}\ 2\nmid(a,b,c),
\\(a-2)(b-2)(c-2)\langle a-2,\ b-2,\ c-2\rangle&\mbox{if}\ 2\mid(a,b,c)\end{cases}
\end{equation}
with respect to the orthogonal basis $\{e_1,e_2,e_3\}$ and
\begin{equation}\label{vector}
v=-\frac12(\frac{a-4}{a-2}e_1+\frac{b-4}{b-2}e_2+\frac{c-4}{c-2}e_3).
\end{equation}
We also set
\begin{equation}\label{space}
V=\Q e_1\perp\Q e_1\perp\Q e_3.
\end{equation}
Let notations be as above, we have the following lemma involving local representations.

\begin{lemma}\label{Lem2.1}
If one of $a$, $b$, $c$ is not divisible by $4$, then for each prime $p$,
$l_n+v_{a,b,c}$ can be represented by $L_p+v$.
\end{lemma}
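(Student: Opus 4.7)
\emph{Plan.} By the change of variables $u_m=2(m-2)x-(m-4)$ used to construct $L$ and $v$ (cf.\ equations \ref{lattice}--\ref{space}), the statement is equivalent to showing that the equation $n=P_a(x)+P_b(y)+P_c(z)$ is solvable over $\Z_p$ for every prime $p$. I would handle the odd primes and $p=2$ separately, exhibiting at each prime either an individual $P_m$ that is $\Z_p$-surjective or a universal diagonal ternary form built from all three.

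\emph{Odd primes.} Since $2\in\Z_p^\times$, $P_m(x)=\tfrac{m-2}{2}x^2-\tfrac{m-4}{2}x$ has $\Z_p$-coefficients, and $\gcd(m-2,m-4)\mid 2$ forces at most one of them into $p\Z_p$. If some $m\in\{a,b,c\}$ has $p\mid m-2$, then $(m-4)/2\in\Z_p^\times$ and the divided-difference identity
$$P_m(x)-P_m(y)=(x-y)\!\left[\tfrac{m-2}{2}(x+y)-\tfrac{m-4}{2}\right]$$
has its bracket always a unit, so $P_m\colon\Z_p\to\Z_p$ is a $p$-adic isometry; rewriting $P_m(x)=n$ as $x=\frac{(m-2)x^2-2n}{m-4}$ gives a Banach contraction (derivative in $p\Z_p$), proving $P_m(\Z_p)=\Z_p$ and hence surjectivity of the sum. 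Otherwise $p\nmid(a-2)(b-2)(c-2)$, each $(m-2)/2$ is a unit, and completing the square reduces the question to representation of a fixed constant by the unimodular diagonal ternary form $\tfrac{a-2}{2}X^2+\tfrac{b-2}{2}Y^2+\tfrac{c-2}{2}Z^2$, which is universal over $\Z_p$ by the classical fact that a primitive unimodular ternary form over $\Z_p$ ($p$ odd) represents every $\Z_p$-integer.

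\emph{The prime $2$.} Pick $m\in\{a,b,c\}$ with $4\nmid m$, set the other two variables to $0$, and solve $(m-2)x^2-(m-4)x-2n=0$ over $\Z_2$. Its discriminant is $\Delta=(m-4)^2+8(m-2)n$: when $m$ is odd, $\Delta\equiv 1\pmod 8$ is a $\Z_2$-square and $v_2(2(m-2))=1$, so the sign of $\sqrt{\Delta}$ can be chosen to make $(m-4)\pm\sqrt{\Delta}$ even, giving $x=\frac{(m-4)\pm\sqrt{\Delta}}{2(m-2)}\in\Z_2$; when $m=4k+2$ one has $v_2(\Delta)=2$ with $\Delta/4=(2k-1)^2+8kn\equiv 1\pmod 8$ a $\Z_2$-square, and the identity $\bigl((2k-1)-\sqrt{\Delta/4}\bigr)\bigl((2k-1)+\sqrt{\Delta/4}\bigr)=-8kn$ forces one sign to satisfy $v_2\bigl((2k-1)\pm\sqrt{\Delta/4}\bigr)\ge 2+v_2(k)+v_2(n)\ge v_2(4k)$, so $x=\frac{(2k-1)\pm\sqrt{\Delta/4}}{4k}\in\Z_2$.

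\emph{Main obstacle.} The delicate point is precisely this prime $2$: when $4\mid m$ the discriminant/valuation argument breaks because $v_2(m-4)$ and $v_2(m-2)$ get entangled, and single-variable surjectivity can fail spectacularly (e.g.\ $P_4(x)=x^2$ hits only $\Z_2$-squares). Indeed when $4\mid\gcd(a,b,c)$ a genuine $2$-adic local obstruction emerges, as foreshadowed in Remark~\ref{Rem1.1}; the hypothesis ``$4\nmid m$ for some $m$'' is sharp and exactly what allows a single $P_m$ to carry the full $2$-adic surjectivity, so no subtler cooperation among the three summands is ever needed at $p=2$.
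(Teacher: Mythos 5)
Your proof is essentially the paper's: you reduce to solvability of $n=P_a(x)+P_b(y)+P_c(z)$ over $\Z_p$, use unimodularity of the ternary lattice at odd primes away from $(a-2)(b-2)(c-2)$, show a single $P_m$ is $\Z_p$-surjective when $p\mid m-2$ ($p$ odd), and at $p=2$ exploit the summand with $4\nmid m$; the paper does exactly this, using the quadratic formula plus the Local Square Theorem where you use a contraction-mapping/isometry argument and valuation bookkeeping on the conjugate roots. One step needs an extra line: in the case $m=4k+2$ you claim the identity $\bigl((2k-1)-\sqrt{\Delta/4}\bigr)\bigl((2k-1)+\sqrt{\Delta/4}\bigr)=-8kn$ alone ``forces'' one factor to have $v_2\ge 2+v_2(k)+v_2(n)$, but the product identity only bounds the sum of the two valuations; you must also observe that the two factors add up to $2(2k-1)$, which has $2$-adic valuation exactly $1$, so the smaller of the two valuations equals $1$ and the larger is $v_2(-8kn)-1=2+v_2(k)+v_2(n)$ as desired (the case $n=0$ being trivial). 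With that sentence added the argument is complete and matches the paper's conclusion in every case.
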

\begin{proof}
We divide our proof into three cases.

{\it Case} 1. $p\nmid 2(a-2)(b-2)(c-2)$.

Then $L_p+v=L_p$ and $L_p$ is unimodular, thus by \cite[Lemma 3.4]{C},
$$L_p\cong\begin{pmatrix} 0 & 1 \\1 & 0 \end{pmatrix}\perp\langle-d(L)\rangle,$$
where $d(L)$ denotes the discriminant of $L$, clearly, $L_p$ can represent all $p$-adic integers over $\Z_p$.

{\it Case} 2. $p\mid 2(a-2)(b-2)(c-2)$ and $p>2$.

Without loss of generality, we may assume that $p\mid a-2$. We now show that $P_a(x)$ can represent all $p$-adic integers over $\Z_p$. Put $a-2=p^k\ve$ and $a-4=\gamma$, where $\ve$, $\gamma\in\Z_p^{\times}$
(where $\Z_p^{\times}$ denotes the set of all $p$-adic units). For any $n\in\Z_p$,
\begin{equation}\label{equation p}
n=\frac{(a-2)x^2-(a-4)x}{2}=\frac{p^k\ve x^2-\gamma x}{2},
\end{equation}
then (\ref{equation p}) has a solution in the algebraic closure of $\Q_p$,
\begin{equation*}
x=\frac{\gamma\pm\sqrt{\gamma^2+8p^k\ve n}}{2p^k\ve},
\end{equation*}
by Local Square Theorem (cf. \cite[Lemma 1.6]{C}), there exists a $u\in\Z_p^{\times}$ such that
$\gamma^2+8p^k\ve n=\gamma^2u^2$, then we have $\gamma^2\equiv \gamma^2u^2\pmod {p^k}$, this implies that
$u\equiv\pm1\pmod {p^k}$, without loss of generality, we may write $u=1-p^kw$ with $w\in\Z_p$, then
(\ref{equation p}) has a solution
\begin{equation*}
x=\frac{\gamma-\gamma(1-p^kw)}{2p^k\ve}=\frac{\gamma w}{2\ve}\in\Z_p.
\end{equation*}

{\it Case} 3. $p=2$.

We claim that when $m$ is odd or $m\equiv 2\pmod 4$, $P_m(x)$ can represent all $2$-adic integers over $\Z_2$, since one of $a$, $b$, $c$ is not divisible by $4$, then one can easily get the desired result if the claim is true. We now prove the claim, when $m$ is odd, set $m-2=\alpha$, $m-4=\beta$, where $\alpha$, $\beta\in\Z_2^{\times}$. For each $n\in\Z_2$,
\begin{equation}\label{equation 2}
n=\frac{(m-2)x^2-(m-4)x}{2}=\frac{\alpha x^2-\beta x}{2},
\end{equation}
then (\ref{equation 2}) has a solution in the algebraic closure of $\Q_2$,
\begin{equation*}
x=\frac{\beta\pm\sqrt{\beta^2+8\alpha n}}{2\alpha}=\frac{1\pm\sqrt{1+8\ve n}}{2\mu},
\end{equation*}
where $\ve=\alpha/\beta^2$ and $\mu=\alpha/\beta$. By Local Square Theorem, we may write
$1+8\ve n=(1-2\gamma)^2$ with $\gamma\in\Z_2$, then $x=\gamma/\mu\in\Z_2$ is a solution of (\ref{equation 2}).
Assume now $m\equiv 2\pmod 4$, set $m-2=2^{k+1}\alpha$, and $m-4=2\beta$, where $\alpha$, $\beta\in\Z_2^{\times}$ and $k\ge 1$. For each $n\in\Z_2$, the equation $n=P_m(x)$ has a solution in the algebraic closure of $\Q_2$ as follows.
\begin{equation*}
x=\frac{\beta\pm\sqrt{\beta^2+2^{k+2}\alpha n}}{2^{k+1}\alpha}=\frac{1\pm\sqrt{1+2^{k+2}\ve n}}{2^{k+1}\mu},
\end{equation*}
where $\ve=\alpha/\beta^2$ and $\mu=\alpha/\beta$. since $k\ge1$, by Local Square Theorem, we may write
$1+2^{k+2}\ve n=(1-2^s\gamma)^2=1-2^{s+1}\gamma+2^{2s}\gamma^2$ with $\gamma\in\Z_2^{\times}$ and $s\ge 1$.
If $s>1$, then
$1+2^{k+2}\ve n=1-2^{s+1}\gamma(1-2^{s-1}\gamma)$, this implies that $s\ge k+1$, thus the equation has a solution $x=2^{s-(k+1)}\gamma/\mu\in\Z_2$. If $s=1$, then $1+2^{k+2}\ve n=1-2^2\gamma(1-\gamma)$, this implies that $\ord_2(1-\gamma)\ge k$ (where $\ord_p$ denotes the $p$-adic order), thus the equation has a solution
\begin{equation*}
x=\frac{1+(1-2\gamma)}{2^{k+1}\mu}=\frac{1-\gamma}{2^k\mu}\in\Z_2.
\end{equation*}

In view of the above, we complete the proof.
\end{proof}
\begin{remark}\label{Rem2.1}
When $a\equiv b\equiv c\equiv 4s\pmod 8$, where $s\in\{0,1\}$, one may easily verify that $\#\{P_a(x)+P_b(y)+P_c(z)+8\Z:
x,y,z\in\Z\}<8$ ($\#S$ denotes the cardinality of a finite set $S$).
When $a\equiv b\equiv c\equiv 0\pmod4$ and two of $a,b,c$ have distinct residues modulo $8$, then one may easily verify that $\#\{P_a(x)+P_b(y)+P_c(z)+16\Z:
x,y,z\in\Z\}<16$.
Thus there is a local obstruction to $\mathcal{F}_{a,b,c}$ being almost universal.
\end{remark}

\subsection{Congruence Theta Functions}
In \cite{Shimura1} Shimura investigated the modular forms of half-integral weights and explicit
transformation formulas for the theta functions. Let $n$ and $N$ be positive integers, let $A$ be an $n\times n$ integral positive definite symmetric matrix such that $NA^{-1}$ is an integral matrix. Further set $l$ be a non-negative integer, and $P$ be a spherical function of order $l$ with respect to $A$, given a column vector $h\in\Z^n$ with $Ah\in N\Z^n$, we consider the following theta function (\cite[2.0]{Shimura1}), with variable $z\in\mathbb{H}$ (where $\mathbb{H}$ denotes the upper half plane),
\begin{equation}\label{theta function}
\theta(z,h,A,N,P)=\sum_{m\equiv h\pmod N}P(m)\cdot e(z\cdot m^tAm/2N^2),
\end{equation}
where the summation runs over all $m\in\Z^n$ such that $m\equiv h\pmod N$, $m^t$ denotes the transposed matrix of $m$ and $e(z)=e^{2\pi iz}.$ We usually call the theta function of this type {\it congruence theta function}. When $P=1$,
we simply write $\theta(z,h,A,N)$ instead of $\theta(z,h,A,N,P)$.
By \cite[Propostion 2.1]{Shimura1}, if the diagonal elements of $A$ are even, then
$\theta(z,h,A,N,P)$ is a modular form of weight $n/2$ on $\Gamma_1(2N)$ with some multiplier.

Congruence theta functions have close connection with the representations of natural numbers by quadratic forms with some additional congruence conditions. To see this, we turn to the representation of natural number $n$ by $\mathcal{F}_{a,b,c}(x,y,z)$.
Let $L$ and $v$ be as in (\ref{lattice}) and (\ref{vector}) respectively. We define
\begin{equation}\label{theta F}
\theta_{L+v}(z):=\sum_{n\ge0}r_{L+v}(n)e(nz),
\end{equation}
where $z\in\mathbb{H}$, and $r_{L+v}(n):=\#\{x\in L: Q(x+v)=n\}$. On the other hand,
put
\begin{align*}
N&=2^{\delta}(a-2)(b-2)(c-2),
\\A&=2^{\delta}\langle a-2,\ b-2,\ c-2\rangle,
\\h&=-2^{\delta-1}((a-4)(b-2)(c-2),(a-2)(b-4)(c-2),(a-2)(b-2)(c-4))^t,
\end{align*}
where
\begin{equation}\label{delta}
\delta=\begin{cases}1&\mbox{if}\ 2\nmid(a,b,c),\\0&\mbox{if}\ 2\mid(a,b,c).\end{cases}
\end{equation}
Then one may easily verify that $\theta_{L+v}(z)=\theta(2Nz,h,A,N)$, thus $\theta_{L+v}(z)$ is a modular form
of weight $3/2$ on $\Gamma_1(4N^2)$. In addition, we need the following unary congruence theta functions,
let $N=2^{\delta}(a-2)(b-2)(c-2)$ with $\delta$ as in (\ref{delta}) and $t$ be a squarefree factor of $N$, further set $A=(N/t)$ and $P(n)=n$, we define
\begin{equation}\label{unary}
u_{h,t}(z):=\theta(2Nz,h,A,N/t,P)=\sum_{r\equiv h\pmod {N/t}}re(tr^2z),
\end{equation}
where $h$ be chosen modulo $N/t$ and the summation runs over all integers $r$ such that $r\equiv h\pmod {N/t}$. Then $u_{h,t}(z)$ is a modular form of weight $3/2$ on $\Gamma_1(4N^2)$ with the same multiplier
of $\theta_{L+v}(z)$.

As in \cite{WR}, we may decompose $\theta_{L+v}(z)$ into the following three parts
\begin{equation}\label{decomposition}
\theta_{L+v}(z)=\mathcal{E}(z)+\mathcal{U}(z)+f(z),
\end{equation}
where $\mathcal{E}(z)$ is in the space generated by Eisenstein series, $\mathcal{U}(z)$ is in the space generated by unary theta functions defined in (\ref{unary}), and $f(z)$ is a cusp form which is orthogonal to those unary theta functions.

We first study the function $\mathcal{E}(z)$, Shimura \cite{Shimura2} proved that
$\mathcal{E}(z)$ is the weighted average of representations by the members of the genus of $L+v$, and simultaneously the product of local densities, thus if $n$ can be represented by $L+v$ locally, and $n$ has bounded divisibility at each anisotropic prime of $V$, then its $n$-th fourier coefficient $a_E(n)\gg n^{1/2-\ve}$. By virtue of \cite{Duke}, the $n$-th fourier coefficient of $f(z)$ grows at most like
$n^{3/7+\ve}$. Hence, if $l_n+v_{a,b,c}$ is represented by $L+v$ locally and has bounded divisibility at each anisotropic prime of $V$, and the ($l_n+v_{a,b,c}$)-th fourier coefficient of $\mathcal{U}$ equals zero,
it is easy to see that $\mathcal{F}_{a,b,c}$ is almost universal.

To show that $l_n+v_{a,b,c}$ has bounded divisibility at each anisotropic prime of $V$, we need to introduce some
results involving quadratic forms over local fields (cf. \cite[Chapter 4]{C}).
For a finite prime $p$ and a non-singular ternary quadratic form $g$ over $\Q_p$,
we adopt the definition of Hasse symbol $c_p(g)$ as in \cite[p. 55]{C}
(note that there are several different definitions of the Hasse symbol).
We also need the following result (cf. \cite[Lemma 2.5]{C}).
\begin{lemma}\label{Cassels}
A necessary and sufficient conditions that the non-singular ternary quadratic form $g$ over $\Q_p$ be isotropic is that
$$c_p(g)=(-1,-d(g))_p,$$
where $d(g)$ denotes the discriminant of $g$ and $(\ ,\ )_p$ denotes the Hilbert symbol over $\Q_p$.
\end{lemma}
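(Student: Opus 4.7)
The plan is to diagonalize $g$, reduce isotropy of the ternary form to a representation question for a binary form, and then translate everything into Hilbert symbol identities via bilinearity.

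After a linear change of variables, one may write $g \cong \langle a_1, a_2, a_3\rangle$ with $a_i \in \Q_p^{\times}$; then $d(g) \equiv a_1 a_2 a_3$ modulo squares, and (with the convention adopted in \cite{C}) $c_p(g)$ is the product of the Hilbert symbols $(a_i, a_j)_p$ over the appropriate index set. Observe that $g$ is isotropic if and only if the binary form $\langle a_1, a_2\rangle$ represents $-a_3$ over $\Q_p$; the degenerate case in which $\langle a_1, a_2\rangle$ is itself isotropic is easily checked to be consistent with the final Hilbert symbol identity, since such a binary form represents every element of $\Q_p^{\times}$.

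The key ingredient is the classical criterion for binary forms: for $a, b, c \in \Q_p^{\times}$, the form $\langle a, b\rangle$ represents $c$ if and only if $(c, -ab)_p = (a, b)_p$. To prove this, I would note that representation of $c$ amounts to isotropy of the ternary $\langle a, b, -c\rangle$, which in turn is equivalent to the splitting of the associated quaternion algebra, i.e.\ $(-ab, c)_p = 1$; bilinearity of the Hilbert symbol then converts this into the stated equivalence.

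Substituting $(a, b, c) = (a_1, a_2, -a_3)$ yields the isotropy criterion $(-a_3, -a_1 a_2)_p = (a_1, a_2)_p$. Expanding via bilinearity together with the identities $(-1,-x)_p = (-1,-1)_p(-1,x)_p$ and $(x,-1)_p(y,-1)_p = (xy,-1)_p$, I would rearrange this to
\[
(a_1, a_2)_p (a_1, a_3)_p (a_2, a_3)_p = (-1, -1)_p (-1, a_1 a_2 a_3)_p = (-1, -d(g))_p,
\]
which, after absorbing any diagonal Hilbert symbols into the chosen definition of $c_p$, reads $c_p(g) = (-1, -d(g))_p$. The main obstacle is not conceptual but notational: one must pin down once and for all the sign conventions for $c_p$ and $d(g)$ (whether diagonal symbols $(a_i,a_i)_p$ are included in $c_p$, and whether $d(g)$ carries an overall sign) and then verify that the Hilbert symbol bookkeeping matches those conventions; with the conventions fixed, every step is mechanical.
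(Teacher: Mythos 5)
The paper does not actually prove this lemma; it is quoted verbatim from Cassels \cite[Ch.~4, Lemma 2.5]{C}, so there is no internal argument to compare against. Your proof is the standard textbook derivation (it is essentially Theorem 6 of Chapter IV of Serre's \emph{Course in Arithmetic}, and also the route Cassels takes), and the Hilbert-symbol bookkeeping at the end is correct: with $c_p(g)=\prod_{i<j}(a_i,a_j)_p$ and $d(g)=a_1a_2a_3$, the binary criterion $(-a_3,-a_1a_2)_p=(a_1,a_2)_p$ does rearrange to $(a_1,a_2)_p(a_1,a_3)_p(a_2,a_3)_p=(-1,-d(g))_p$, and the degenerate case where $\langle a_1,a_2\rangle$ is isotropic is handled as you say. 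Two small points deserve care. First, your intermediate assertion that isotropy of $\langle a,b,-c\rangle$ is equivalent to ``$(-ab,c)_p=1$'' is not literally correct: normalizing gives the condition $(ac,bc)_p=1$, which by bilinearity equals $(a,b)_p(c,-ab)_p$, whence the correct criterion $(c,-ab)_p=(a,b)_p$; you land on the right formula, but the quaternion algebra that must split is $(-ab,ac)$, not $(-ab,c)$. Second, deriving the binary representation criterion from ``isotropy of a ternary form $\Leftrightarrow$ splitting of a quaternion algebra'' risks circularity, since the ternary isotropy criterion is exactly what the lemma asserts; the clean way to ground the binary criterion is the norm characterization, namely that the nonzero values of $ax^2+by^2$ are precisely $a\cdot N_{K/\Q_p}(K^{\times})$ for $K=\Q_p(\sqrt{-ab})$, so that $c$ is represented iff $(c/a,-ab)_p=1$ iff $(c,-ab)_p=(a,-ab)_p=(a,b)_p$. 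With that substitution your argument is complete and matches the convention under which the lemma is stated.
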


Let notations be as above, we have the following two lemmas involving bounded divisibility at anisotropic primes.
\begin{lemma}\label{isotropic odd}
$l_n+v_{a,b,c}$ has bounded divisibility at each odd anisotropic prime of $V$.
\end{lemma}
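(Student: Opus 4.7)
My strategy is to fix an odd prime $p$ at which $V$ is anisotropic and prove the stronger fact that $v_p(l_n+v_{a,b,c})$ is in fact constant in $n$. Setting $\alpha:=v_p(a-2)$, $\beta:=v_p(b-2)$, $\gamma:=v_p(c-2)$, I will first note that anisotropy of the ternary form $\langle a-2,b-2,c-2\rangle$ over $\Q_p$ precludes it from being a unit form, since unit ternary forms are always isotropic at odd primes; hence $\max(\alpha,\beta,\gamma)\ge 1$. Because $v_p(l_n)=\alpha+\beta+\gamma+v_p(n)$, an application of the non-archimedean triangle inequality will reduce the lemma to the sharper inequality $v_p(v_{a,b,c})<\alpha+\beta+\gamma$, which then forces $v_p(l_n+v_{a,b,c})=v_p(v_{a,b,c})$ for every $n$.

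The key reformulation I will use is the factorisation
\[
v_{a,b,c}=2^{-\delta}(a-2)(b-2)(c-2)\cdot S,\qquad S:=\frac{(a-4)^2}{a-2}+\frac{(b-4)^2}{b-2}+\frac{(c-4)^2}{c-2},
\]
turning the target inequality into $v_p(S)<0$. For each $x\in\{a,b,c\}$ with $v_p(x-2)\ge 1$ we have $x\equiv 2\pmod p$, whence $x-4\equiv-2\pmod p$; since $p$ is odd this forces $v_p(x-4)=0$, so that summand contributes exactly $-v_p(x-2)$ to the valuation of $S$. Writing $M:=\max(\alpha,\beta,\gamma)\ge 1$, the minimum among the three summand valuations equals $-M$; when this minimum is attained uniquely, no cancellation can occur and $v_p(S)=-M<0$ as desired.

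The delicate case is when two of $\alpha,\beta,\gamma$ equal $M$; note that $\alpha=\beta=\gamma=M$ is ruled out because then $V_p$ would be a scaled unit form and hence isotropic. Say $\beta=\gamma=M>\alpha$ and write $b-2=p^M u_b$, $c-2=p^M u_c$. The two competing leading terms of $S$ sum to $\frac{4(u_b+u_c)}{p^M u_b u_c}$, whose valuation exceeds $-M$ precisely when $u_b+u_c\equiv 0\pmod p$, equivalently when $-u_b u_c\equiv u_b^2\pmod p$ is a square. My plan is to rule this out via a Jordan-block (or Hilbert-symbol, as in Lemma \ref{Cassels}) analysis: the form $\langle p^\alpha u_a,p^M u_b,p^M u_c\rangle$ turns out to be $\Q_p$-anisotropic precisely when $M-\alpha$ is odd \emph{and} $-u_b u_c$ is a non-square modulo $p$, which directly contradicts the cancellation condition. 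The main obstacle is exactly this case analysis of Jordan decompositions; once it is in place, anisotropy of $V_p$ forbids the one dangerous cancellation and the lemma drops out.
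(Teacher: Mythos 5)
Your proposal is correct and follows essentially the same route as the paper: both arguments reduce to a $p$-adic valuation count on the three terms of $v_{a,b,c}$, split according to whether the maximum of $v_p(a-2),v_p(b-2),v_p(c-2)$ is attained once, twice, or three times, and in the tied case both show that cancellation forces $-u_bu_c$ to be a square mod $p$ and hence $V_p$ to be isotropic (the paper's Case 3, where it concludes $\langle a-2,b-2\rangle$ is isotropic via the Local Square Theorem). Your repackaging through $S=\sum(x-4)^2/(x-2)$ even yields the slightly sharper statement that $v_p(l_n+v_{a,b,c})$ is constant in $n$, but the underlying mechanism is identical.
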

\begin{proof}
When $p\nmid 2(a-2)(b-2)(c-2)$, as in the Case 1 in the proof of Lemma \ref{Lem2.1}, it is easy to see that $V_p$ is isotropic.

When $p\mid (a-2)(b-2)(c-2)$ and $p>2$, we set $k=\ord_p(a-2)$, $l=\ord_p(b-2)$, $m=\ord_p(c-2)$,
without loss of generality, we assume that $k\ge l\ge m$.
We shall divide the remaining proof into following three cases.

{\it Case} 1. $k=l=m$.

In this case, by scaling, $V_p^{1/p^k}$ is unimodular, thus $V_p$ is isotropic.

{\it Case} 2. $k>l$.

One may easily verify that $\ord_p(l_n+v_{a,b,c})=l+m$.

{\it Case} 3. $k=l>m$.

In this case, by scaling, we may simply assume that $k=l>m=0$, put $a-2=p^k\alpha$, $b-2=p^k\beta$,
$c-2=\gamma$, $a-4=\ve$ and $b-4=\eta$, where $\alpha,\ \beta,\ \gamma,\ \ve,\ \eta\in\Z_p^{\times}$.
Since
\begin{equation*}
(a-4)^2(b-2)(c-2)+(a-2)(b-4)^2(c-2)=p^k\gamma(\alpha\eta^2+\beta\ve^2).
\end{equation*}

If $\alpha\eta^2+\beta\ve^2\not\equiv 0\pmod p$, then $\ord_p(l_n+v_{a,b,c})\le k+m$.

If $\alpha\eta^2+\beta\ve^2\equiv 0\pmod p$, then $-\alpha\beta\equiv \beta^2\ve^2/\eta^2\pmod p$,
by Local Square Theorem, we have $-\alpha\beta\in\Q_p^{\times2}$, thus $\langle a-2,\ b-2\rangle$ is
isotropic and hence $V_p$ is also isotropic.

In view of the above, we complete the proof.
\end{proof}

Bounded divisibility at prime $2$ is much more complicated, by Lemma \ref{Lem2.1}, we always assume that
one of $a,\ b,\ c$ is not divisible by $4$, we put
$k=\ord_2(a-2),\ l=\ord_2(b-2),\ m=\ord_2(c-2)$, without loss of generality, we assume that $k\ge l\ge m$,
note that under our assumption, when $2\mid (a,b,c)$, we must have $a\equiv 2\pmod 4$. We further set $a-2=2^k\alpha$, $b-2=2^l\beta$, $c-2=2^m\gamma$ with $\alpha,\ \beta,\ \gamma\in\Z_2^{\times}$, then we have the following results.

\begin{lemma}\label{isotropic 2}
Let notations and assumptions be as above, if at least one of $a,b,c$ is not divisible by $4$, then
we have either $V_2$ is isotropic or $l_n+v_{a,b,c}$ has bounded divisibility at $2$.
\end{lemma}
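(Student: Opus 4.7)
The plan is to carry out a case analysis on the triple $(k,l,m)$ of $2$-adic valuations $\ord_2(a-2),\ord_2(b-2),\ord_2(c-2)$, in direct parallel with the proof of Lemma~\ref{isotropic odd}, but using $2$-adic Hilbert symbols (via Lemma~\ref{Cassels}) to detect isotropy of $V_2$. The basic strategy is: whenever $V_2$ is \emph{anisotropic}, exhibit one summand of $l_n+v_{a,b,c}$ whose $2$-adic valuation is strictly smaller than all the others, so that the ultrametric inequality gives a bound independent of $n$.

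First I would record the $2$-adic valuation of each of the three summands of $v_{a,b,c}$: the summand containing $(a-4)^2$ has valuation $2\ord_2(a-4)+l+m$, and cyclically for the other two; simultaneously $\ord_2(l_n)\gs 3-\delta+k+l+m$. Whenever one of these four valuations is strictly smaller than the rest, an explicit bound on $\ord_2(l_n+v_{a,b,c})$ follows immediately. For example, in the all-odd case $k=l=m=0$, each summand of $v_{a,b,c}$ is odd while $l_n$ is a multiple of $8$, so $\ord_2(l_n+v_{a,b,c})=0$ for every $n$, regardless of isotropy.

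In the remaining \emph{balanced} configurations, where two or more of the relevant valuations coincide, cancellation could in principle push the total $2$-adic valuation arbitrarily high. For each such configuration I would scale out the common power of $2$ and compute the leading $2$-adic coefficient of the cancelled expression modulo $2$, and modulo $8$ in the finer sub-cases. The proof will then dichotomise: either the leading coefficient is itself a $2$-adic unit, in which case $\ord_2(l_n+v_{a,b,c})$ is capped by the common valuation; or it vanishes sufficiently, and I will show that this vanishing forces a square relation such as $-\al\be\in\Q_2^{\times 2}$ (making the binary subform $\langle a-2,b-2\rangle$ isotropic over $\Q_2$), or more generally matches the Cassels identity $c_2(V_2)=(-1,-d(V_2))_2$, whence $V_2$ is isotropic.

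The main obstacle will be the $2$-adic Hilbert symbol arithmetic: unlike odd primes, $\Q_2^\times/\Q_2^{\times 2}$ has order $8$ and $(\,\cdot\,,\,\cdot\,)_2$ depends on residues modulo $8$, so both the Local Square Theorem and the Cassels criterion must be applied with genuine $\bmod\ 8$ bookkeeping. The two most delicate configurations are $k=l=m$, where scaling reduces the question to isotropy of a unimodular ternary form $\langle\al,\be,\ga\rangle$ governed purely by $\al,\be,\ga\pmod 8$; and the case $2\mid(a,b,c)$ with exactly one of $a,b,c$ not divisible by $4$ (forcing $k\gs 2$ and $l=m=1$), where two of the three summands of $v_{a,b,c}$ share valuation $k+2$ and their leading coefficients must be compared modulo $8$ to decide between the two alternatives. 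I expect these to be handled by a short finite table of residues $\al,\be,\ga\pmod 8$ together with the observation that, under each $(k,l,m)$, the values of $\ord_2(a-4),\ord_2(b-4),\ord_2(c-4)$ are determined.
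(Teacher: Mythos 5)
Your proposal follows essentially the same route as the paper: a case analysis on the $2$-adic valuations $(k,l,m)$, bounding $\ord_2(l_n+v_{a,b,c})$ by a dominant-summand (ultrametric) argument when the valuations are unbalanced, and, in the balanced cases where cancellation can occur, deducing isotropy of $V_2$ either from $-\alpha\beta\in\Q_2^{\times 2}$ via the Local Square Theorem or from the Cassels criterion $c_2(V_2)=(-1,-d(V_2))_2$ with mod-$8$ Hilbert-symbol bookkeeping. The only discrepancy is a harmless mislabelling of which sub-case of $2\mid(a,b,c)$ is delicate (it is the one with two of $a,b,c$ congruent to $2$ modulo $4$, not exactly one not divisible by $4$), which does not affect the method.
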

\begin{proof}
By scaling, for convenience, we still write $V=\langle a-2,b-2,c-2\rangle$ throughout this lemma.
We first consider the case when $2\nmid (a,b,c)$, we shall divide the proof of this part into three cases.

{\it Case} 1.1. $abc\not\equiv0\pmod2$.

It is clear that $l_n+v_{a,b,c}\not\equiv 0\pmod 2$ in this case.

{\it Case} 1.2. $a\equiv0\pmod2$ and $2\nmid bc$.

Assume first that $a\equiv 2\pmod 4$, if $k>2$, it is clear that $\ord_2(l_n+v_{a,b,c})=2$. If $k=2$,
then $a\equiv6\pmod8$, one may easily verify that $v_{a,b,c}\equiv 4\pmod8$, thus $\ord_2(l_n+v_{a,b,c})=2.$

Assume now that $a\equiv 0\pmod4$, if $b\equiv c\pmod 4$, since
\begin{equation*}
(b-4)^2(c-2)+(b-2)(c-4)^2\equiv b+c\equiv 2\pmod 4,
\end{equation*}
we have $\ord_2(l_n+v_{a,b,c})=2$. If $b\not\equiv c\pmod4$, then
\begin{equation*}
c_2(V_2)=(2\alpha,\beta)_2(2\alpha,\gamma)_2(\beta,\gamma)_2=(-1)^{(\alpha+1)/2}.
\end{equation*}
On the other hand,
$$(-1,-d(V_2))_2=-(-1,\alpha)_2=(-1)^{(\alpha+1)/2}.$$
Thus by Lemma \ref{Cassels}, $V_2$ is isotropic.

{\it Case} 1.3. $a\equiv b\equiv0\pmod 2$ and $2\nmid c$.

When $a\equiv b\equiv 0\pmod 4$, then it is clear that $\ord_2(l_n+v_{a,b,c})=2$.

When $a\equiv b\equiv 2\pmod 4$, set $a-4=2\ve,\ b-4=2\eta,\ c-4=\mu$ with
$\ve,\ \eta,\ \mu\in\Z_2^{\times}$.Assume first that $k>l\ge2$, one may easily verify that
$\ord_2(l_n+v_{a,b,c})=2+l$. Assume now that $k=l>4$, since
\begin{equation*}
(a-4)^2(b-2)(c-2)+(a-2)(b-4)^2(c-2)=2^{k+2}\gamma(\alpha\eta^2+\beta\ve^2).
\end{equation*}
If $\alpha\eta^2+\beta\ve^2\not\equiv 0\pmod8$, then it is easy to see that
$\ord_2(l_n+v_{a,b,c})=k+2+\ord_2(\alpha\eta^2+\beta\ve^2)$.

If $\alpha\eta^2+\beta\ve^2\equiv0\pmod8$, then $-\alpha\beta\equiv \beta^2\ve^2/\eta^2\pmod8$,
by Local Square Theorem, we have $-\alpha\beta\in\Q_2^{\times2}$, thus $V_2$ is isotropic.

If $k=l=2$, then $v_{a,b,c}=2^4(\alpha\eta^2\gamma+\beta\ve^2\gamma+\alpha\beta\mu^2)$, it is clear that
$\ord_2(l_n+v_{a,b,c})=4$.

If $k=l=3$, then
$v_{a,b,c}=2^6(\gamma(\frac{\alpha\eta^2+\beta\ve^2}{2})+\alpha\beta\mu^2)$,
Suppose first that $8\nmid \gamma(\frac{\alpha\eta^2+\beta\ve^2}{2})+\alpha\beta\mu^2$, then it is clear that $\ord_2(l_n+v_{a,b,c})<9$. Assume now that $8\mid \gamma(\frac{\alpha\eta^2+\beta\ve^2}{2})+\alpha\beta\mu^2$,
this implies that $\alpha\equiv\beta\pmod4$, we set $\alpha-\beta\equiv 4s\pmod 8$, where $s\in\{0,1\}$,
then we have
\begin{equation*}
c_2(V_2)=(2^3\alpha,2^3\beta)_2(2^3\alpha,\gamma)_2(2^3\beta,\gamma)_2=(-1)^s(-1)^{(\alpha-1)/2}.
\end{equation*}
On the other hand, since $8\mid \gamma(\frac{\alpha\eta^2+\beta\ve^2}{2})+\alpha\beta\mu^2$, we have
$\gamma\equiv 2s-\alpha\pmod4$, thus
$$(-1,-(a-2)(b-2)(c-2))_2=(-1)^s(-1)^{(\alpha-1)/2}.$$
By Lemma \ref{Cassels}, $V_2$ is isotropic.

If $k=l=4$, then
\begin{equation*}
(a-4)^2(b-2)(c-2)+(a-2)(b-4)^2(c-2)=2^6\gamma(\alpha\eta^2+\beta\ve^2),
\end{equation*}
Suppose first that $\alpha\equiv\beta\pmod4$, then $4\nmid\alpha\eta^2+\beta\ve^2$, thus $\ord_2(l_n+v_{a,b,c})=7$. Assume now that $\alpha\not\equiv\beta\pmod4$, then we have
\begin{equation*}
c_2(V_2)=(\alpha,\beta)_2(\alpha,\gamma)_2(\beta,\gamma)_2=(\alpha\beta,\gamma)_2=(-1)^{(\gamma-1)/2}.
\end{equation*}
On the other hand,
$$(-1,-(a-2)(b-2)(c-2))_2=(-1,\gamma)_2=(-1)^{(\gamma-1)/2}.$$
Thus by Lemma \ref{Cassels}, $V_2$ is isotropic.

When $a\equiv2\pmod4$ and $b\equiv0\pmod4$. We assume first that $k>2$, then it is clear that
$\ord_2(l_n+v_{a,b,c})=3$. Assume now that $k=2$, let the notations be as above, since
\begin{equation*}
(a-4)^2(b-2)(c-2)+(a-2)(b-2)(c-4)^2=2^3\beta(\alpha\mu^2+\gamma\ve^2),
\end{equation*}
with the essentially same method as above, we have if $\alpha\mu^2+\gamma\ve^2\equiv 0\pmod8$, then $V_2$
is isotropic, if $\alpha\mu^2+\gamma\ve^2\not\equiv0\pmod8$, then $l_n+v_{a,b,c}$ has bounded divisibility at $2$.

In sum, we finish the case when $2\nmid (a,b,c)$. Now, we consider the case when $2\mid (a,b,c)$,
we divide the proof of this part into the following three cases.

{\it Case} 2.1. $a\equiv b\equiv2\pmod4$ and $c\equiv 0\pmod4$.

If $k>l$, then it is clear that $\ord_2(l_n+v_{a,b,c})=1+l$.

If $k=l$, we put $a-4=2\ve,\ b-4=2\eta$, since
\begin{equation*}
(a-4)^2(b-2)(c-2)+(a-2)(b-4)^2(c-2)=(c-2)(\alpha\eta^2+\beta\ve^2),
\end{equation*}
in the same way as the above cases, we have $V_2$ is either isotropic or $l_n+v_{a,b,c}$ has bounded divisibility at $2$.

{\it Case} 2.2. $a\equiv 2\pmod 4$ and $b\equiv c\equiv 0\pmod 4$.

It is easy to see $\ord_2(l_n+v_{a,b,c})=2$.

{\it Case} 2.3. $a\equiv b\equiv c\equiv 2\pmod4$.

We put $a-4=2\ve,\ b-4=2\eta,\ c-4=2\mu$.

If $k>l$, then it is clear that $\ord_2(l_n+v_{a,b,c})=l+m$.

If $k=l=m$, since $v_{a,b,c}=2^{2k}(\ve^2\beta\gamma+\eta^2\alpha\gamma+\mu^2\alpha\beta)$, we have
$\ord_2(l_n+v_{a,b,c})=2k$.

If $k=l>m$, assume first that $l-m>2$, then
\begin{equation*}
2^{-2}((a-4)^2(b-2)(c-2)+(a-2)(b-4)^2(c-2))=2^{k+m}\gamma(\alpha\eta^2+\beta\ve^2),
\end{equation*}
using the same method as the above cases, if $\alpha\eta^2+\beta\ve^2\equiv0\pmod8$, then $V_2$ is isotropic,
if $\alpha\eta^2+\beta\ve^2\not\equiv0\pmod8$, note that $l>m+2$, we have
$\ord_2(l_n+v_{a,b,c})=k+m+\ord_2(\alpha\eta^2+\beta\ve^2)$.

Assume now that $l-m=1$, then $v_{a,b,c}=2^{2k}(\gamma(\frac{\alpha\eta^2+\beta\ve^2}{2})+\alpha\beta\mu^2)$, if $8\nmid\gamma(\frac{\alpha\eta^2+\beta\ve^2}{2})+\alpha\beta\mu^2$,
then it is clear that $\ord_2(l_n+v_{a,b,c})<2k$ (note that $k\ge3$ in this case).
If $8\nmid\gamma(\frac{\alpha\eta^2+\beta\ve^2}{2})+\alpha\beta\mu^2$, this implies that $\alpha\equiv\beta\pmod4$, we set $\alpha-\beta\equiv 4s\pmod8$, where $s\in\{0,1\}$, then
one may easily verify that $\gamma\equiv 2s-\alpha\pmod4$, then we have
\begin{equation*}
c_2(V_2)=(2^k\alpha,2^k\beta)_2(2^k\alpha,2^{k-1}\gamma)_2(2^k\beta,2^{k-1}\gamma)_2=(-1)^s(-1)^{(\alpha-1)/2}.
\end{equation*}
On the other hand,
$$(-1,-(a-2)(b-2)(c-2))_2=(-1,-\gamma)_2=(-1)^{(\gamma+1)/2}=(-1)^s(-1)^{(\alpha-1)/2}.$$
Thus by Lemma \ref{Cassels}, $V_2$ is isotropic.

Finally, if $l-m=2$, assume first that $\alpha\equiv\beta\pmod 4$, since
\begin{equation*}
2^{-2}((a-4)^2(b-2)(c-2)+(a-2)(b-4)^2(c-2))=2^{2k-2}\gamma(\alpha\eta^2+\beta\ve^2),
\end{equation*}
note that $\alpha\eta^2+\beta\ve^2\equiv 2\pmod 4$, thus $\ord_2(l_n+v_{a,b,c})=2k-1$.
Assume now that $\alpha\not\equiv\beta\pmod 4$, then
\begin{equation*}
c_2(V_2)=(2^k\alpha,2^k\beta)_2(2^k\alpha,2^{k-2}\gamma)_2(2^k\beta,2^{k-2}\gamma)_2=(-1)^{(\gamma-1)/2}.
\end{equation*}
On the other hand,
$$(-1,-(a-2)(b-2)(c-2))_2=(-1)^{(\gamma-1)/2}.$$
Thus, by Lemma \ref{Cassels}, $V_2$ is isotropic.

In view of the above, we complete the proof.
\end{proof}
By Lemma \ref{isotropic odd} and Lemma \ref{isotropic 2}, for any finite prime $p$,
we always have either $V_p$ is isotropic or $l_n+v_{a,b,c}$ has bounded divisibility at $p$.

\maketitle
\section{Proof of the main results}
\setcounter{lemma}{0}
\setcounter{theorem}{0}
\setcounter{corollary}{0}
\setcounter{remark}{0}
\setcounter{equation}{0}
\setcounter{conjecture}{0}

In this section, with the help of the lemmas in Section 2, we shall prove the main results of this paper.

\noindent{\it Proof of Theorem 1.1}. (1) By the decomposition of $\theta_{L+v}(z)$ in (\ref{decomposition}), if we can show that the $(l_n+v_{a,b,c})$-th fourier coefficient of
$\mathcal{U}(z)$ equals zero for each $n\in\N$, then we can easily verify that
$\mathcal{F}_{a,b,c}(x,y,z)$ is almost universal.

Since $\mathcal{U}(z)$ is a linear combination of finitely many unary congruence theta functions defined in
(\ref{unary}), thus the possible $u_{h,t}(z)$ in the decomposition with non-zero coefficient must satisfy
\begin{equation}
l_n+v_{a,b,c}=tr^2\equiv0\pmod t.
\end{equation}
Hence, (1) of Theorem \ref{Thm1.1} holds.

(2) Since the greatest common divisor of any two of $a-2,b-2,c-2$ is of the form $2^r$ with $r\in\N$.
Hence, for each odd prime $p$ dividing $(a-2)(b-2)(c-2)$, it is
clear that $p\nmid l_n+v_{a,b,c}$, thus the possible $u_{h,t}(z)$ in the decomposition with non-zero coefficient is of the form $u_{h,1}(z)$ or $u_{h,2}(z)$. Moreover,
if (i) and (ii) are satisfied, one may easily verify that $l_n+v_{a,b,c}\not\in\{r^2,2r^2: r\in\N\}$.
In sum, we have the $(l_n+v_{a,b,c})$-th fourier coefficient of
$\mathcal{U}(z)$ equals zero for each $n\in\N$, that is, $\mathcal{F}_{a,b,c}(x,y,z)$ is almost universal.

In view of the above, we complete the proof.
\qed
\medskip

\noindent{\it Proof of Proposition 1.1}. (1) As in the proof of Theorem \ref{Thm1.1}, it is sufficient to show
that $$(\mathcal{N}_{a,b,c}\cap\mathcal{S}_{a,b,c}^1)\cup(\mathcal{N}_{a,b,c}\cap\mathcal{S}_{a,b,c}^2)=\emptyset.$$
Since $a,b,c$ are distinct modulo $3$, it is easy to see that
$l_n+v_{a,b,c}\equiv 2\pmod3$, this implies that $l_n+v_{a,b,c}$ is not a square. On the other hand, if (i) is satisfied, then $l_n+v_{a,b,c}$ is odd, if (ii) or (iii) is satisfied, then $l_n+v_{a,b,c}\equiv 4\pmod8$,
this implies that $l_n+v_{a,b,c}$ is not of the form $2r^2$ with $r\in\N$. In sum, we have the $(l_n+v_{a,b,c})$-th fourier coefficient of
$\mathcal{U}(z)$ equals zero for each $n\in\N$, that is, $\mathcal{F}_{a,b,c}(x,y,z)$ is almost universal.

(2) As in the proof of (1), we have $l_n+v_{a,b,c}$ is not of the form $2r^2$ with $r\in\N$.
If $n\equiv a+1\pmod3$, then $l_n+v_{a,b,c}\equiv 2\pmod3$, this implies that $l_n+v_{a,b,c}$ is not a square.
Thus the $(l_n+v_{a,b,c})$-th fourier coefficient of
$\mathcal{U}(z)$ equals zero for each positive integer $n$ with $n\equiv a+1\pmod 3$, then it is clear that
(2) is true.
\qed
\medskip

\noindent{\it Proof of Corollary \ref{m,m+1,m+2}}.
By (2) of Theorem \ref{Thm1.1} and (1) of Proposition \ref{Pro1.1}, (1) of Corollary \ref{m,m+1,m+2} holds.
For odd prime $p$ dividing $m(m-1)(m-2)$, it is clear that $p\nmid l_n+v_{m,m+1,m+2}$ for each $n\in\N$,
thus the possible $u_{h,t}(z)$ in the decomposition with non-zero coefficient is of the form $u_{h,1}(z)$ or $u_{h,2}(z)$. Note that $m,m+1,m+2$ are distinct modulo $3$, thus with the same method in the proof of (1) of Proposition \ref{Pro1.1}, we see that the possible $u_{h,t}(z)$ in the decomposition with non-zero coefficient must be of the form $u_{h,2}(z)$. This implies (2) of Corollary \ref{m,m+1,m+2}.

In view of the above, we complete the proof.\qed
\medskip

\noindent{\it Proof of Corollary \ref{2^k}}. Let the notations be as in the introduction.
As in the proof of Theorem \ref{Thm1.1}, it is sufficient to show
that the $(l_n+v_{a,b,c})$-th fourier coefficient of $\mathcal{U}(z)$ equals zero for each $n\in\N$,
where $a=2^k\alpha+2$, $b=2^l\beta+2$ and $c=2^m\gamma+2$.
Since $\alpha,\beta,\gamma$ are pairwisely coprime, thus it is easy to see that for each odd
prime $p$ dividing $(a-2)(b-2)(c-2)$, $(p,l_n+v_{a,b,c})=1$. Hence, the possible $u_{h,t}(z)$ in the decomposition with non-zero coefficient is of the form $u_{h,1}(z)$ or $u_{h,2}(z)$.

(1) If the condition (i) is satisfied, then
\begin{equation*}
l_n+v_{a,b,c}=2^{3k+1}\alpha\beta\gamma n+2^{2k}(\alpha\gamma\eta^2+\beta\gamma\ve^2+\alpha\beta\mu^2).
\end{equation*}
Clearly, $\ord_2(l_n+v_{a,b,c})=2k$, thus $l_n+v_{a,b,c}\ne2r^2$ for any $r\in\N$. Moreover, since
$\alpha,\beta,\gamma$ are distinct modulo $3$, then $l_n+v_{a,b,c}\equiv-1\pmod3$, this implies that
$l_n+v_{a,b,c}\ne r^2$ for any $r\in\N$. In sum, we have the $(l_n+v_{a,b,c})$-th fourier coefficient of $\mathcal{U}(z)$ equals zero for each $n\in\N$.

(2) If (ii) is satisfied, then
\begin{equation*}
l_n+v_{a,b,c}=2^{k+m+l+1}\alpha\beta\gamma n+2^{l+m}(2^{k-l}\alpha\gamma\eta^2+\beta\gamma\ve^2+2^{k-m}\alpha\beta\mu^2).
\end{equation*}
Since $k>l$ and $k\equiv l\equiv m\pmod2$, thus $\ord_2(l_n+v_{a,b,c})$ is even,
then $l_n+v_{a,b,c}\ne2r^2$ for any $r\in\N$. Furthermore, since
$\alpha,\beta,\gamma$ are distinct modulo $3$, then $l_n+v_{a,b,c}\equiv-1\pmod3$, this implies that
$l_n+v_{a,b,c}\ne r^2$ for any $r\in\N$. In sum, we have the $(l_n+v_{a,b,c})$-th fourier coefficient of $\mathcal{U}(z)$ equals zero for each $n\in\N$.

(3) If (iii) is satisfied, then
\begin{equation*}
l_n+v_{a,b,c}=2^{2k+m+1}\alpha\beta\gamma n+2^{l+m}(\alpha\gamma\eta^2+\beta\gamma\ve^2+2^{k-m}\alpha\beta\mu^2).
\end{equation*}
Since $\alpha\equiv\beta\pmod 4$, using the condition that $k\not\equiv m\pmod 2$ and that $k-m>1$,
one can easily verify that
$\ord_2(l_n+v_{a,b,c})=k+m+1\equiv 0\pmod 2$, then $l_n+v_{a,b,c}\ne2r^2$ for any $r\in\N$. In addition,
Since $3\mid\gamma$ and $\alpha\equiv\beta\not\equiv 0\pmod 3$, we have $l_n+v_{a,b,c}\equiv-1\pmod3$, this implies that
$l_n+v_{a,b,c}\ne r^2$ for any $r\in\N$. In sum, we have the $(l_n+v_{a,b,c})$-th fourier coefficient of $\mathcal{U}(z)$ equals zero for each $n\in\N$.

In view of the above, we complete the proof.\qed
\medskip


\begin{thebibliography}{99}
\bibitem{WKCOH} W. K. Chan and B.-K. Oh, {\it Almost universal ternary sums of triangular numbers},
    Proc. Amer. Math. Soc. {\bf 137} (2009), 3553--3562.
\bibitem{WKCOH2} W. K. Chan and B.-K. Oh, {\it Representations of integral quadratic polynomials}, Contemp. Math. Amer. Math. Soc. {\bf 587} (2013), 31--46.
\bibitem{WKCANNA} W. K. Chan and A. Haensch, {\it Almost universal ternary sums of squares and triangular numbers}, Quadratic and Higher Degree Forms, Dev. Math, vol. 31, Springer-Verlag, New York, 2013, 51--62.
\bibitem{C} J. W. S. Cassels, {\it Rational Quadratic Forms}, Academic Press, London, 1978.
\bibitem{Duke} W. Duke, {\it Hyperbolic distribution problems and half-integral weight Maass forms}, Invent. Math. {\bf 92} (1988), 73--90.
\bibitem{WR} W.Duke and R. Schulze-Pillot, {\it Representations of integers by positive ternary quadratic forms and equidistribution of lattice points on ellipsoid}, Invent. Math. {\bf99} (1990), 49--57.
\bibitem{Guy} R. K. Guy, {\it Every number is expressible as a sum of how many polygonal numbers?} Amer. Math. Monthly {\bf 101} (1994), 169--172.
\bibitem{ANNA} A. Haensch, {\it A characterization of almost universal ternary quadratic polynomials with odd prime power conductor}, J. Number Theory {\bf 141} (2014), 202--213.
\bibitem{AK} A. Haensch, B. Kane, {\it Almost universal ternary sums of polygonal numbers}, Research in Number Theory, 2018.
\bibitem{KS} B. Kane and Z.-W. Sun, {\it On almost universal mixed sums of squares and triangular numbers},
¡¡¡¡Trans. Amer. Math. Soc. {\bf 362} (2010), 6425--6455.
\bibitem{Ki} Y. Kitaoka, {\it Arithmetic of Quadratic Forms}, Cambridge Tracts in Math., Vol. 106, 1993.
\bibitem{N} M. B. Nathanson. {\it Additive number theory : the classical bases}, Springer Berlin, 1996.
\bibitem{P} R. Schulze-Pillot,{\it Darstellung durch Spinorgeschlechter ternarer quadratischer Formen}, J. Number Theory {\bf 12} (1980), 529--540.
\bibitem{Shimura1} G. Shimura, {\it On modular forms of half-integral weight}, Ann. Math. {\bf 97} (1973),
440--481.
\bibitem{Shimura2} G. Shimura, {\it Inhomogeneous quadratic forms and triangular numbers}, Amer. J. Math. {\bf 126} (2004), 191--214.
\bibitem{Siegel} C. L. Siegel, {\it Uber die Klassenzahl algebraischer Zahlkorper}, Acta Arith. {\bf 1} (1935), 83--86.
\bibitem{Sun} Z.-W. Sun, {\it On universal sums of polygonal numbers}, Sci. China Math, {\bf 58} (2015), 1367--1396.
\bibitem{V} F. van der Blij, {\it On the theory of quadratic forms}, Ann. Math {\bf 50} (1949), 875--883.
\end{thebibliography}
\end{document}